\newtheorem{theorem}{Theorem}[section]
\newtheorem{definition}{Definition}[section]
\newtheorem{remark}{Remark}[section]
\colorlet{LightGray}{White!90!Periwinkle}
\colorlet{LightOrange}{Orange!15}
\colorlet{LightGreen}{Green!15}
\declaretheoremstyle[name=Theorem,]{thmsty}
\begin{document}
	\title{The central limit theorems for integrable Hamiltonian systems perturbed by white noise} 
	
	\author{Chen Wang $^{a,1}$, Yong Li $^{*,b,1,2}$ }
	
	\renewcommand{\thefootnote}{}
	\footnotetext{\hspace*{-6mm}
		
		\begin{tabular}{l l}
			$^{*}$~~~The corresponding author.\\
			$^{a}$~~~E-mail address : wangchen22@mails.jlu.edu.cn.\\
			$^{b}$~~~E-mail address : liyong@jlu.edu.cn.\\
			$^{1}$~~~School of Mathematics, Jilin University, Changchun 130012, People's Republic of China.\\
			$^{2}$~~~School of Mathematics and Statistics, Center for Mathematics and Interdisciplinary Sciences,\\
			~~~~~ Northeast Normal University, Changchun 130024, People's Republic of China.
	\end{tabular}}
	
\date{}     

\maketitle

\noindent$\mathbf{Abstract.}$ In this paper, we consider the dynamics of
integrable stochastic Hamiltonian systems. Utilizing the Nagaev-Guivarc'h method, we obtain several generalized results of the central limit theorem.
Making use of this technique and the Birkhoff ergodic theorem, we prove that the invariant tori persist under stochastic perturbations. Moreover, they asymptotically follow a Gaussian distribution, which gives a positive answer to the stability of  integrable stochastic Hamiltonian systems over time. Our results hold true for both Gaussian and non-Gaussian noises, and their intensities can be not small.

\noindent $\mathbf{Keywords.}$  Integrable stochastic Hamiltonian system; Central limit theorem; Invariant tori
\section{Introduction}

This paper mainly concerns the stability of invariant tori for integrable Hamiltonian systems under white noise perturbations. Such white noise might be Gaussian or non-Gaussian with significant intensities. 

Hamiltonian systems, endowed with significant geometric structures, constitute a crucial class of dynamical systems frequently used to depict the motion of planets and the evolution of numerous microscopic systems \cite{05,18,19}. Within this vast kind of models, integrable systems stand out as a special subclass which emerge from simple physical models such as two-body problems, linear oscillators, and so on. These systems admit an explicit description in terms of action-angle variables, where each phase space is foliated into a family of invariant tori, and the movement of the systems on each torus is quasi-periodic.

However, during the study of the three-body problem, Poincar$\mathrm{\acute{e}}$ \cite{27} realized that the first integrals of the systems don't \color{black} always exist and most of systems are \color{black} not integrable in fact. Based on this, he named the study of nearly integrable systems \textquotedblleft  Fundamental Problem of Dynamics\textquotedblright . The celebrated KAM theorem (see \cite{28}) tells us that under proper non-degenerate conditions, the invariant tori of such systems will survive after the addition of deterministic perturbations, for some developments, see, for example, \cite{25,08,11}. Such stability can be called as persistence. \color{black}

Certainly, when constructing system models using deterministic differential equations, it is imperative that the parameters within these equations are also deterministic. However, in almost every aspect of our real world, pinpointing the precise values of these parameters poses a significant challenge, due to their inevitable perturbations by the environment across both temporal and spatial dimensions.
Therefore, as Bismut said in \cite{03}, it is reasonable to consider stochastic differential equations with Hamiltonian structures for our study. In the selection of noise types, white noise stands out as a viable option due to its extensive applications in quantum physics, finance, and so on. Specifically, Gaussian white noise holds a basic position representing random factors in a continuous manner, mathematically expressed through the generalized time derivatives of Brownian motions. In this aspect, we refer to some relevant literature \cite{31,21,33}. In recent years, there has been a growing interest in analyzing non-Gaussian white noise, with a particular emphasis on $\mathrm{L\acute{e}vy}$ white noise, as stochastic differential equations driven by $\mathrm{L\acute{e}vy}$ processes can describe physical systems where continuous and jumping random factors interact. However, despite this increasing attention, there have been relatively few results regarding this type of Hamiltonian system, see \cite{02,30}.

Going further, stability remains an important consideration in dynamical systems, and this holds true for stochastic Hamiltonian systems as well, where the addition of external noise can have significant impacts on the original systems, especially for integrable systems. 
Notably, we pose the following questions: 

\textit{After adding stochastic perturbations to  integrable Hamiltonian systems, 
	what properties will be preserved over time? Do central limits exist for these perturbed systems?  If they do exist, how can we estimate these limits?}

To this end, 
some central limit theorems, as exemplified in \cite{26}, should be taken into account. These theorems tell us that, under certain conditions, the standardized sum of random variables tends to a normal distribution as the sample size increases. Reviewing the existing literature, limit theorems for dynamical systems have aroused the interest, which do describe the long-term behaviors of orbits that reach stable limits beyond chaos. Let us review some of that work.
In 1989, Denker \cite{06} dealt with 
the properties of the subspace in $L^{2}$ consisting of those functions for which the central limit theorem  holds and provided some applications.
Later, Freidlin and Wentzell discussed in their book \cite{15} about the averaging principle, fast oscillating perturbations, and so on. 
Chen \cite{04} investigated the sharp limit theorems for functionals of ergodic Markov chains on general measurable state spaces, which include the central limit theorem, the law of iterated logarithm and the moderate deviation principle.
In 2004, Dolgopyat \cite{09} considered a large class of partially hyperbolic systems and proved that as long as the rate of mixing is sufficiently high, the systems  can \color{black} satisfy many classical limit theorems. In \cite{29}, Shirikyan showed that once the distribution of perturbation  is sufficiently non-degenerate, strong law of large numbers and central limit theorem for solutions of a class of dissipative partial differential equations can  be established, and the corresponding rates of convergence are estimated as well. Derriennic \cite{07} provided an introduction to some previous developments of limit theorems in ergodic theory. In 2012, Komorowski \cite{20} showed the law of large numbers and central limit theorem for an additive functional with respect to a non-stationary Markov process, provided that the dual transition probability semigroup, defined on measures, is \color{black} strongly
contractive in an appropriate Wasserstein metric. Recently, Simoi et al. \cite{10} proved several limit theorems for a simple class of partially hyperbolic fast–slow systems based on a mixture of standard pairs and transfer operators, Huang et al. \cite{17} considered the one-dimensional stochastic heat equations driven by multiplicative space–time white
noise and proved that after renormalization, the spatial integral of the solution will converge in total variance distance to a standard normal distribution together with a functional
version of central limit theorems. Through the
techniques of time discretization and truncation, Liu et al. \cite{24} studied the averaging principle for stochastic differential equations with slow and fast time-scales,
where the coefficients in the slow equation are time dependent. 
In 2022, Wang et al. \cite{32} established the strong law of large numbers and the central limit theorem for additive functionals of the segment processes and derived the law of iterated logarithm under some dissipative assumptions, at the same year, Fernando and P$\mathrm{\grave{e}}$ne \cite{14} studied higher order expansions both in the Berry–Ess$\mathrm{\acute{e}}$en estimate and the local limit theorems for Birkhoff sums of chaotic probability preserving dynamical systems,  Liu and Lu \cite{23} proved the strong law of large numbers and central limit theorems, alongside estimating the rates of convergence, for the continuous time solution processes of the incompressible 2D Navier-Stokes equations with deterministic time quasi-periodic forces and white noise. 

In the present article, we adopt the idea of
characteristic functions in \cite{13} and establish several central limit theorems for integrable stochastic Hamiltonian systems perturbed by white noise, where the Birkhoff ergodic theorem and some auxiliary probabilistic results derived in this paper are used. Successfully, the invariant tori are shown to persist asymptotically under stochastic perturbations, so that within the framework of central limit theorem, such kind of integrable systems are stable as time goes to infinity. Moreover, we underline that our preconditions do not require properties like exponential mixing, dissipation, etc., which gave rise to high frequencies in some existing literature.

This paper is organized as follows. In Section 2, we recall some preliminaries for later use.
In Section 3, referring to \cite{13} and \cite{16}, we derive several results about the central limit theorem. To be specific, in the theory of dynamical systems, it's impossible to ensure that the distribution of a solution process remains consistent over time, which motivates us to generalize Theorem 1.2 of \cite{13},
so as to pave the way for the upcoming discussions. In Section 4, we provide applications of Section 3: the central limit theorems for integrable Hamiltonian systems perturbed by white noise. 

Throughout this paper, we equip $\mathbb{R}^{d}$ with the general Euclidean inner product $\left \langle  \cdot,\cdot \right \rangle $ and Euclidean norm $\left \| \cdot \right \| $.
For later calculations, we denote $z^{2}$ as a $d\times d$ matrix $zz^{\mathrm{T}}$ for any $z=\left ( z_{1},\cdots,z_{d} \right )^{\mathrm{T}} \in \mathbb{R}^{d}$. Moreover, we denote $\dot{x}=dx/dt$ as the derivative of $x$ with respect to time $t$ and use the notation $\mathcal{N}\left ( \mu,\sigma^{2} \right ) $ to represent the Gaussian distribution with expectation $\mu$ and covariance $\sigma^{2}$.

\section{Preliminaries}
\begin{definition}
	Given a filtered probability space $\left ( \Omega,\mathcal{F},\mathcal{F}_{t},\mathbb{P} \right ) $, $\left \{ X\left (t  \right ),t\ge 0  \right \} $ is said to be an $\mathbb{R}^{d}$-valued L$\mathrm{\acute{e}vy}$ process, if the following conditions are satisfied.
	
	\noindent (i) For any $n \ge 1$ and $0 \le t_{0} <t_{1}<\cdots<t_{n}$, random variables $X\left ( t_{0} \right ) $, $X\left (t_{1}  \right )-X\left ( t_{0} \right )  $, $\cdots$, $X\left (t_{n} \right ) -X\left (   t_{n-1}\right )$ are independent of each other.
	
	\noindent (ii) $X\left ( 0 \right )=0 ,\ \mathrm{a.s.}.$
	
	\noindent (iii) The distribution of $X\left (s+t  \right )-X\left (s  \right )  $ does not depend on $s\ge 0$.
	
	\noindent (iv) For any $a>0$ and $s \ge 0$,
	\begin{align*}
		\lim_{t \to s}\mathbb{P}\left ( \left |X\left ( t \right ) -X\left ( s \right )   \right | >a \right ) =0.
	\end{align*}
\end{definition}

\begin{theorem} (Continuity theorem) 
	In order that a sequence $\left \{ F_{n} \right \} $ of probability distributions converges properly to a probability distribution $F$, it 
	is necessary and sufficient that the sequence $\left \{ \varphi_{n} \right \} $ of their characteristic functions converges pointwise to a limit $\varphi$, and that $\varphi$ is continuous in some neighborhood of the origin. In this case $\varphi$ is the characteristic function of $F$.
\end{theorem}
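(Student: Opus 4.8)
The plan is to prove both directions of the equivalence, treating the forward (necessity) implication as essentially immediate and reserving the real work for the converse (sufficiency). For necessity, suppose $F_{n}$ converges properly (i.e.\ weakly) to $F$. For each fixed $t$ the map $x \mapsto e^{i\left \langle t,x \right \rangle}$ is bounded and continuous on $\mathbb{R}^{d}$, so the definition of weak convergence gives $\varphi_{n}(t) \to \varphi_{F}(t)$ pointwise, where $\varphi_{F}$ is the characteristic function of $F$. Since any characteristic function is continuous everywhere (by dominated convergence), the limit $\varphi := \varphi_{F}$ is in particular continuous on a neighborhood of the origin, and it is the characteristic function of $F$.

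For sufficiency the heart of the matter is to upgrade the \emph{local} hypothesis (continuity of $\varphi$ at the origin) into \emph{tightness} of the family $\{F_{n}\}$. I would rely on the classical tail inequality: for a real random variable $Y$ with characteristic function $\psi$,
\[
\mathbb{P}\!\left(|Y| \ge 2/u\right) \le \frac{1}{u}\int_{-u}^{u}\bigl(1 - \mathrm{Re}\,\psi(s)\bigr)\,ds,
\]
which follows by writing $\int_{-u}^{u}(1-\psi(s))\,ds = 2u\int\bigl(1 - \tfrac{\sin(ux)}{ux}\bigr)\,dF(x)$ via Fubini and using the elementary bound $1 - \tfrac{\sin y}{y} \ge \tfrac12$ for $|y| \ge 2$. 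Applying this to each coordinate of $F_{n}$, whose one-dimensional characteristic function is the restriction of $\varphi_{n}$ to the corresponding axis, controls all tails simultaneously. Because $\varphi_{n} \to \varphi$ pointwise with $|1 - \varphi_{n}| \le 2$, dominated convergence yields $\tfrac{1}{u}\int_{-u}^{u}(1 - \mathrm{Re}\,\varphi_{n})\,ds \to \tfrac{1}{u}\int_{-u}^{u}(1 - \mathrm{Re}\,\varphi)\,ds$; and since $\varphi(0) = \lim_{n}\varphi_{n}(0) = 1$ together with continuity of $\varphi$ at $0$ forces the latter expression to vanish as $u \to 0$, I can first fix $u$ small and then $n$ large to make every tail uniformly small. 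This establishes tightness.

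Once tightness is in hand I would invoke Prokhorov's theorem: every subsequence of $\{F_{n}\}$ admits a further subsequence converging properly to some probability distribution $G$. By the necessity direction already proved, such a $G$ must have characteristic function $\varphi$; and by the uniqueness theorem for characteristic functions, $G$ is forced to be one and the same distribution $F$ for every convergent subsequence. Since every subsequence has a further subsequence converging to this common limit $F$, the whole sequence $F_{n}$ converges properly to $F$, whose characteristic function is $\varphi$, completing the proof.

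The main obstacle is precisely the tightness step: converting continuity of $\varphi$ at a single point into a uniform global bound on the tails of all the $F_{n}$ at once, for which the quantitative inequality above is the indispensable device. Everything afterwards -- extracting convergent subsequences by compactness and identifying their limit by uniqueness -- is routine once tightness has been secured.
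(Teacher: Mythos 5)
The paper does not prove this statement at all: Theorem 2.1 is stated in the Preliminaries as the classical continuity theorem, quoted from Feller [13], and is then invoked as a black box in Theorems 3.1--3.3. So there is no in-paper argument to compare against, and your proposal has to stand on its own merits --- which it does. Necessity is indeed immediate from the definition of weak convergence applied to the bounded continuous functions $x \mapsto e^{i\left \langle t,x \right \rangle}$, together with the fact that characteristic functions are always continuous. For sufficiency, your route is the standard textbook one: the truncation inequality
\begin{align*}
\mathbb{P}\left ( \left | Y \right | \ge 2/u \right ) \le \frac{1}{u}\int_{-u}^{u}\left ( 1-\mathrm{Re}\,\psi\left ( s \right ) \right ) ds
\end{align*}
(whose derivation via Fubini and the bound $1-\sin y/y\ge 1/2$ for $\left | y \right |\ge 2$ you state correctly) converts continuity of $\varphi$ at the origin into tightness of $\left \{ F_{n} \right \}$, after which Prokhorov's theorem plus uniqueness of characteristic functions identifies every subsequential limit with one and the same distribution $F$, and the subsequence-of-subsequence argument (which tacitly uses metrizability of weak convergence, e.g.\ the L\'evy--Prokhorov metric) upgrades this to proper convergence of the whole sequence. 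Three small points worth making explicit in a write-up: (a) the inequality gives uniform tail control only for $n$ beyond some $N$; the finitely many remaining $F_{n}$ are tight individually, and you should say so; (b) you only use continuity of $\varphi$ at the origin, which is weaker than the quoted hypothesis of continuity in a neighborhood of the origin, so your argument proves a slightly stronger statement --- that is fine, not a gap; (c) your coordinate-wise reduction covers the $d$-dimensional case, which is what the paper actually needs in Theorems 3.2--3.3 even though Feller's original statement is one-dimensional.
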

\begin{theorem}($L\acute{e}vy-It\hat{o}$ decomposition) If $\left \{ X\left (t  \right ),t\ge 0  \right \} $ is a $d$-dimensional $L\acute{e}vy$ process, then $X\left ( t \right ) $ has a decomposition
	\begin{align*}
		X\left ( t \right ) =\gamma t+\xi B\left ( t \right )+\int_{\left |z  \right |<1 } z\tilde{N}\left ( t,dz \right ) +\int_{\left |z  \right |\ge 1 } zN\left ( t,dz \right ) ,\tag{2.1}
	\end{align*}
	satisfying $\int\min\left \{\left |z \right |^{2}  ,1  \right \} \nu\left ( dz \right )  <\infty$, where $\gamma \in \mathbb{R}^{d}$ is the drift vector, $B\left ( t \right ) $ is the standard $d$-dimensional pairwise independent Brownian motion, $\xi$ is a positive constant, $N\left ( ds,dz \right ) $ is the Poisson random measure of $X\left ( t \right ) $ defined as follows. For a measurable space $\left ( E,\Upsilon \right ) $ and a filtered probability space $\left ( \Omega,\mathcal{F},\mathcal{F}_{t},\mathbb{P} \right ) $,
	\begin{align*}
		N\left ( t,Q \right )\left ( w \right )  :=\sharp\left \{ s\mid 0 \le s \le t:\Delta X\left ( s \right )\left ( w \right ) 
		=X\left ( s \right )\left ( w \right ) 
		-X\left ( s- \right ) \left ( w \right ) \in Q 
		\right \}.
	\end{align*} 
	Here $Q \in \mathcal{B}\left ( \mathbb{R}^{d} \setminus \left \{0  \right \}  \right )$, $w \in \Omega$ and 
	$\sharp\left \{ \cdot \right \} $ is the cardinal number of set $\cdot$. Meanwhile, by denoting the jump measure of $N$ 
	as $\nu=\mathbb{E}N\left ( 1,\cdot \right ) $, where $\mathbb{E}$ is the expectation with respect to $\mathbb{P}$, we define the compensated Poisson random measure $\tilde{N}$ of $X\left ( t \right ) $ as $\tilde{N}\left (  ds,dz\right ) =N\left ( ds,dz \right )-\nu\left ( dz \right )ds $.
\end{theorem}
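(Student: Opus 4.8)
The plan is to construct each term of the decomposition directly from the sample-path behaviour of $X$, separating the jump part from a continuous remainder, and then to identify that remainder as a Brownian motion with drift. The starting point is the Poisson random measure $N$ defined in the statement, which records the jumps of $X$. Using the independent-increments and stationarity properties from Definition 2.1, I would first show that for any Borel set $Q$ bounded away from the origin the process $t \mapsto N(t,Q)$ is a Poisson process, that the random measures attached to disjoint sets are mutually independent, and that $N(t,Q)$ has mean $t\,\nu(Q)$ with $\nu(\cdot)=\mathbb{E}N(1,\cdot)$. A separate estimate, stemming from the a.s. finiteness of the quadratic variation of a L\'evy process on compact time intervals, yields the integrability condition $\int \min\{|z|^{2},1\}\,\nu(dz)<\infty$, which guarantees that every subsequent integral is meaningful.

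Next I would build the two jump terms. The large-jump part $\int_{|z|\ge 1} z\,N(t,dz)$ is a finite sum over the finitely many jumps of size at least one occurring before time $t$, so it is a compound Poisson process, well defined pathwise and requiring no compensation. The small-jump part is more delicate: the uncompensated integral $\int_{|z|<1} z\,N(t,dz)$ may fail to converge, so I would define the compensated integral $\int_{|z|<1} z\,\tilde{N}(t,dz)$ as the $L^{2}$-limit, as $\varepsilon\downarrow 0$, of the martingales $\int_{\varepsilon\le|z|<1} z\,\tilde{N}(t,dz)$. Convergence follows from the isometry $\mathbb{E}\big\|\int_{\varepsilon\le|z|<1} z\,\tilde{N}(t,dz)\big\|^{2} = t\int_{\varepsilon\le|z|<1}|z|^{2}\,\nu(dz)$ together with Doob's maximal inequality, the right-hand side being controlled by the L\'evy-measure condition above.

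It then remains to treat the continuous part. Setting $Y(t)=X(t)-\int_{|z|<1} z\,\tilde{N}(t,dz)-\int_{|z|\ge 1} z\,N(t,dz)$, I would argue that $Y$ is again a L\'evy process and, crucially, has continuous sample paths, since every jump of $X$ has been removed. A continuous L\'evy process is Gaussian: its increments are stationary, independent, and continuous in probability, and an infinite-divisibility argument forces each increment to be normally distributed, so $Y(t)=\gamma t+\xi B(t)$ for a drift vector $\gamma$ and a standard Brownian motion $B$. Assembling the four pieces gives the representation (2.1).

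I expect the main obstacle to be twofold: first, rigorously justifying the $L^{2}$-convergence of the compensated small-jump integral and identifying its limit as a genuine process; and second, proving that the continuous Gaussian part $Y$ is independent of the jump measure $N$. The cleanest route to both is through characteristic functions — computing the joint characteristic function of $Y(t)$ and the jump integrals and showing that it factorizes — which also delivers the L\'evy-Khintchine representation as a by-product. One must, however, be careful to invoke the continuity theorem (Theorem 2.1) only after verifying both the pointwise convergence of the relevant characteristic functions and their continuity at the origin.
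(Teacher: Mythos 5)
The paper never proves this statement: Theorem 2.2 sits in the Preliminaries as quoted background, imported from the literature (it is the classical L\'evy--It\^o decomposition; see Applebaum \cite{01}, which the paper cites for exactly this material and later uses as a black box in Section 4.1.2). So there is no internal proof to compare yours against, and your proposal must be judged as a reconstruction of the classical argument. In structure it is exactly that: the jump measure is shown to be a Poisson random measure with intensity $\nu$, the large jumps are summed pathwise into a compound Poisson process, the small jumps are compensated and obtained as $L^{2}$-limits via the isometry $\mathbb{E}\bigl\|\int_{\varepsilon\le|z|<1}z\,\tilde{N}(t,dz)\bigr\|^{2}=t\int_{\varepsilon\le|z|<1}|z|^{2}\,\nu(dz)$ together with Doob's inequality, and the continuous remainder is identified as a Brownian motion with drift, with independence of the pieces obtained by factorizing characteristic functions. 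This is the proof in \cite{01}, and the skeleton is sound.

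Two points need repair. First, you derive the integrability condition $\int\min\{|z|^{2},1\}\,\nu(dz)<\infty$ from the ``a.s.\ finiteness of the quadratic variation'' of $X$. As stated this is circular: finiteness of $\sum_{s\le t}|\Delta X(s)|^{2}$ is not a pathwise property of c\`adl\`ag functions (a c\`adl\`ag path can have jumps of alternating sign and size $n^{-1/2}$ accumulating at a single time, whose squares are not summable), and in the standard treatments it is proved \emph{by} the same compensation and martingale estimates you invoke afterwards, typically by bounding the variances of the compensated jump sums over $\{\varepsilon\le|z|<1\}$ uniformly in $\varepsilon$ and only then concluding $\int_{|z|<1}|z|^{2}\,\nu(dz)<\infty$; you should reverse that order. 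Second, your argument can only deliver the general form of the Gaussian part, namely $\gamma t+G(t)$ with $G$ a continuous centred Gaussian L\'evy process having an arbitrary covariance matrix; the form $\xi B(t)$ with a positive scalar $\xi$ and $B$ a standard $d$-dimensional Brownian motion, as written in the paper's statement, is a strictly special isotropic case. That discrepancy is a defect of the paper's formulation rather than of your outline, but your proof should be stated as producing the covariance-matrix version.
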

\begin{remark}
	It is evident that any $L\acute{e}vy$ process can be readily decomposed into three distinct components: a Brownian motion with a deterministic drift, small jumps, and large jumps. This decomposition can be described using the parameters $\left ( \gamma,\xi,N,\nu\right ) $. If $\nu=N=0$, then (2.1) reduces to Wiener process.
\end{remark}
\begin{theorem} (Liouville-Arnold theorem)
	Let $\left ( M, \omega \right )$ be a $2n$-dimensional symplectic manifold, and let $H: M \to \mathbb{R}$ be a smooth Hamiltonian function defining a Hamiltonian system on $M$. Suppose there exist $n$ functionally independent smooth functions $F_1, \cdots, F_n: M \to \mathbb{R}$ such that $\left \{ F_i, F_j \right \} = 0$ for all $i, j = 1, \ldots, n$, where $\left \{\cdot, \cdot  \right \}$ denotes the Poisson bracket. Then, for any regular value $f = \left ( f_1, \cdots, f_n \right )$ of the map $F = \left ( F_1, \cdots, F_n \right ): M \to \mathbb{R}^n$, the level set  
	\begin{align*}
		M_f = \left \{ x \in M : F_i(x) = f_i, \, i = 1, \cdots, n \right \}
	\end{align*}
	is an $n$-dimensional smooth submanifold of $M$, there exists a neighborhood $U$ of $M_f$ in $M$ and a symplectomorphism $\Phi: U \to V$ (where $V$ is an open subset of $\mathbb{T}^n \times \mathbb{R}^n$ with $\mathbb{T}^n$ the $n$-torus) such that in the new coordinates $\left(\phi_1, \cdots, \phi_n, I_1, \cdots, I_n\right )$ on $V$, the transformed Hamiltonian system takes the form  
	\begin{align*}
		\dot{\phi}_i = \omega_i(I), \quad \dot{I}_i = 0, \quad i = 1, \cdots, n,
	\end{align*}
	where $\omega_i\left ( I \right ) $ are smooth functions of $I = \left ( I_{1},\cdots,I_{n} \right )$ alone. Here, the functions $\phi_{i}$ and $I_{i}$ are called angle variables and action variables, respectively.
\end{theorem}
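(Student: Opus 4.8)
The plan is to run the classical Arnold--Liouville argument, organized into four movements: (i) show that $M_f$ is a smooth isotropic $n$-manifold carrying a commuting frame of Hamiltonian vector fields, (ii) integrate those fields into a free $\mathbb{R}^n$-action exhibiting each compact connected fiber as a torus, (iii) build angle and action variables and verify they are canonical, and (iv) read off the equations of motion. Throughout I would work on a compact connected component of $M_f$, which is the standing hypothesis needed to obtain $\mathbb{T}^n$.

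First I would invoke the regular value theorem: since $f$ is a regular value of $F=(F_1,\dots,F_n)$, the differentials $dF_1,\dots,dF_n$ are linearly independent along $M_f$, so $M_f$ is a smooth $n$-dimensional submanifold. Define the Hamiltonian vector field $X_{F_i}$ by $\iota_{X_{F_i}}\omega = dF_i$. The involution hypothesis gives $X_{F_i}F_j = \{F_j,F_i\}=0$, so each $X_{F_i}$ is tangent to every level set $\{F_j=f_j\}$, hence to $M_f$; moreover $\omega(X_{F_i},X_{F_j})=\{F_i,F_j\}=0$, so $M_f$ is isotropic, and since $\omega$ is nondegenerate and the $dF_i$ are independent, the $X_{F_i}$ are pointwise linearly independent. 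Thus $X_{F_1},\dots,X_{F_n}$ form a frame for $TM_f$, making $M_f$ Lagrangian.

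Next, from $[X_{F_i},X_{F_j}]=X_{\{F_i,F_j\}}=0$ the flows $\Phi^{t_i}_{F_i}$ commute, so $(t_1,\dots,t_n)\mapsto \Phi^{t_1}_{F_1}\circ\cdots\circ\Phi^{t_n}_{F_n}$ defines a locally free transitive $\mathbb{R}^n$-action on the fiber (completeness following from compactness). Its isotropy subgroup is a discrete full-rank lattice $\Lambda_f\subset\mathbb{R}^n$, whence $M_f\cong\mathbb{R}^n/\Lambda_f\cong\mathbb{T}^n$; choosing a basis of $\Lambda_f$ that depends smoothly on $f$ and rescaling to period $2\pi$ yields the angle variables $\phi_1,\dots,\phi_n$. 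For the action variables I would set, with $\lambda$ a local primitive of $\omega$ and $\gamma_1(f),\dots,\gamma_n(f)$ the basic cycles of the torus,
\begin{align*}
	I_i = \frac{1}{2\pi}\oint_{\gamma_i(f)}\lambda,\qquad i=1,\dots,n,
\end{align*}
and check that $f\mapsto I(f)$ is a local diffeomorphism; a generating-function (Poincar\'e-lemma) argument then shows $\omega=\sum_i dI_i\wedge d\phi_i$, i.e. $\Phi$ is a symplectomorphism onto an open $V\subset\mathbb{T}^n\times\mathbb{R}^n$.

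Finally, the integrability hypothesis $\{H,F_i\}=0$ forces $H$ to be constant on each fiber $M_f$, so in the new coordinates $H=H(I)$ depends on the actions alone; Hamilton's equations in canonical form then give $\dot\phi_i=\partial H/\partial I_i=:\omega_i(I)$ and $\dot I_i=-\partial H/\partial\phi_i=0$, as claimed. The main obstacle is step (iii): proving that the period integrals are well-defined functions of $f$, independent of the cycle representative, and that $(\phi,I)$ are genuinely canonical. This rests on $M_f$ being Lagrangian, so that $\lambda|_{M_f}$ is closed and the $\oint_{\gamma_i}\lambda$ are homological invariants, combined with a Stokes/homotopy argument controlling the smooth dependence of the cycles $\gamma_i(f)$ and the lattice $\Lambda_f$ on $f$. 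Verifying the nondegeneracy of $f\mapsto I(f)$ and assembling these local constructions into the stated symplectomorphism is where the real work lies.
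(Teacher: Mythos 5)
The paper does not prove this statement at all: it is recorded in Section~2 as a classical preliminary (the Liouville--Arnold theorem, stated without proof and used only to justify passing to action--angle coordinates in Section~4), so there is no proof of record to compare yours against. Judged on its own, your sketch is the standard textbook argument (regular value theorem, Lagrangian fibers via involutivity, the commuting $\mathbb{R}^n$-action and full-rank isotropy lattice on a compact connected fiber, action variables as period integrals $\frac{1}{2\pi}\oint_{\gamma_i(f)}\lambda$, and Hamilton's equations in the new coordinates), and it is sound at the level of an outline. Two points are worth emphasizing. First, you correctly supply hypotheses that the paper's statement omits but which the conclusion genuinely requires: compactness and connectedness of the level set $M_f$ (without which the fiber may be a cylinder $\mathbb{T}^k\times\mathbb{R}^{n-k}$ and the stated symplectomorphism onto an open subset of $\mathbb{T}^n\times\mathbb{R}^n$ fails), and the relation $\{H,F_i\}=0$ (equivalently $H=H(F)$, e.g.\ $F_1=H$), without which the final step $H=H(I)$, $\dot I_i=0$, does not follow. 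Second, as you acknowledge, the sketch defers the genuinely technical parts: exactness of $\omega$ on a tubular neighborhood of $M_f$ (which follows from $M_f$ being Lagrangian and the neighborhood retracting onto it), smooth dependence of the lattice $\Lambda_f$ and the cycles $\gamma_i(f)$ on $f$, nondegeneracy of $f\mapsto I(f)$, and the generating-function verification that $(\phi,I)$ are canonical; a complete proof would need these carried out, but as a blind reconstruction of the intended classical argument your proposal is the right one.
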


\section{Some results about the central limit theorem 
}
In this section, adopting the viewpoint of the characteristic function, we present several conclusions about the central limit theorem. 
\subsection{One-dimensional discrete time case}
Consider a sequence of real \color{black} random variables $\left \{Y_{i},i=1,\cdots,n  \right \} $ in $L^{2}$ and denote $Q_{n}$ as its partial sums.
\begin{theorem}
	Assume that there exist $\delta>0$ and functions $c\left ( t \right ) $, $\lambda_{k}\left ( t \right ) $ and $d_{n}\left ( t \right ) $ defined on $\left [-\delta,\delta  \right ] $, such that for all $t \in \left [-\delta,\delta  \right ]$ and $n \in \mathbb{N}$,
	\begin{align*}
		E\left (\mathrm{exp}\left \{ itQ_{n} \right \}\right )=\prod_{k=1}^{h }c\left ( t \right )\lambda_{k}\left ( t \right )^{m_{k}} +d_{n}\left ( t \right ) ,
	\end{align*}
	where $h=h\left ( n \right ) $ is a positive integer and $\left \{ m_{k} \right \}_{k=1,\cdots,h} $ satisfies $
	m_{1}+\cdots+m_{h}=n$.
	Moreover, assume that
	
	\noindent (i) There exist $A_{k}$ and $\sigma_{k}^{2}$, $k=1,\cdots,h$, in $\mathbb{R}$, such that when $t \to 0$,
	\begin{align*}
		\lambda_{k}\left ( t \right )=\mathrm{exp}\left \{ iA_{k}t-\frac{\sigma_{k}^{2}t^{2}}{2}+o\left ( t^{2} \right )  \right \} .
	\end{align*}

	\noindent (ii) The function $c$ is continuous at 0.
	
	\noindent (iii) The quantity $\left \| d_{n} \right \| _{L^{\infty}\left ( -\delta,\delta \right ) }$ tends to 0 when $n$ tends to infinity,
	
	\noindent and one of the following two additional assumptions is set up.
	
	\noindent (iv) The quantity $\sum_{k=1}^{h}m_{k}\sigma^{2}_{k}/n$ converges to a limit $\mu^{2}\ge0$ as $n$ goes to infinity.
	
	\noindent (v) The quantity $\sum_{k=1}^{h}m_{k}\sigma^{2}_{k}/n$ is bounded by $\eta^{2}\ge 0$.
	
	\noindent Then when $n$ tends to infinity, under (i)-(iv), $\left (Q_{n}-\sum_{k=1}^{h}m_{k}A_{k}  \right )/\sqrt{n}$ converges to a Gaussian distribution $\mathcal{N}\left (0,\mu^{2}  \right ) $, while under (i)-(iii) and (v), there exists a constant $ \zeta \in\left [0,\eta  \right ] $ and a sequence $\left \{ n_{j} \right \}_{j \in \mathbb{N}}\subset \left \{ n \right \}$, such that $\left (Q_{n_{j}}-\sum_{k=1}^{h_{j}}m_{k}A_{k}  \right )/\sqrt{n_{j}}$ converges to a Gaussian distribution $\mathcal{N}\left (0,\zeta^{2} \right ) $, where $h_{j}=h\left ( n_{j} \right ) $ is a positive integer.
\end{theorem}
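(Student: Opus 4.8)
The plan is to read off the characteristic function of the centered, normalized sum directly from the hypothesized representation, let $n\to\infty$, recognize the limit as a Gaussian characteristic function, and then apply the continuity theorem (Theorem 2.1). Throughout, fix $t\in\mathbb{R}$; since $t/\sqrt{n}\in[-\delta,\delta]$ for all large $n$, the representation for $E(\exp\{i(t/\sqrt{n})Q_{n}\})$ is available. Setting $W_{n}:=\left(Q_{n}-\sum_{k=1}^{h}m_{k}A_{k}\right)/\sqrt{n}$ and $\varphi_{n}(t):=E(\exp\{itW_{n}\})$, the centering contributes a modulus-one phase, so that
\[
\varphi_{n}(t)=\exp\left\{-i\frac{t}{\sqrt{n}}\sum_{k=1}^{h}m_{k}A_{k}\right\}\left[c\left(\frac{t}{\sqrt{n}}\right)\prod_{k=1}^{h}\lambda_{k}\left(\frac{t}{\sqrt{n}}\right)^{m_{k}}+d_{n}\left(\frac{t}{\sqrt{n}}\right)\right].
\]

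Next I would substitute the expansion in (i), written as $\lambda_{k}(s)=\exp\{iA_{k}s-\tfrac{1}{2}\sigma_{k}^{2}s^{2}+s^{2}\varepsilon_{k}(s)\}$ with $\varepsilon_{k}(s)\to0$ as $s\to0$, into the product and collect the exponents. The linear parts sum to $i\frac{t}{\sqrt{n}}\sum_{k}m_{k}A_{k}$, which is cancelled exactly by the centering phase; the quadratic parts give $-\tfrac{1}{2}t^{2}\cdot\frac{1}{n}\sum_{k}m_{k}\sigma_{k}^{2}$; and the leftover is $\frac{t^{2}}{n}\sum_{k}m_{k}\varepsilon_{k}(t/\sqrt{n})$. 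Hence
\[
\varphi_{n}(t)=c\left(\frac{t}{\sqrt{n}}\right)\exp\left\{-\frac{t^{2}}{2n}\sum_{k=1}^{h}m_{k}\sigma_{k}^{2}+\frac{t^{2}}{n}\sum_{k=1}^{h}m_{k}\varepsilon_{k}\!\left(\frac{t}{\sqrt{n}}\right)\right\}+\exp\left\{-i\frac{t}{\sqrt{n}}\sum_{k=1}^{h}m_{k}A_{k}\right\}d_{n}\left(\frac{t}{\sqrt{n}}\right).
\]

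I would then pass to the limit term by term. Evaluating the representation at $t=0$ gives $c(0)\prod_{k}\lambda_{k}(0)^{m_{k}}+d_{n}(0)=1$; since $\lambda_{k}(0)=1$ and $d_{n}(0)\to0$ by (iii), this forces the normalization $c(0)=1$, whence $c(t/\sqrt{n})\to1$ by the continuity assumption (ii). By (iii), $|d_{n}(t/\sqrt{n})|\le\|d_{n}\|_{L^{\infty}(-\delta,\delta)}\to0$, and the accompanying phase has modulus one, so the second summand vanishes. Under (iv) we have $\frac{1}{n}\sum_{k}m_{k}\sigma_{k}^{2}\to\mu^{2}$, so the quadratic term tends to $-\tfrac{1}{2}\mu^{2}t^{2}$ and therefore $\varphi_{n}(t)\to\exp\{-\tfrac{1}{2}\mu^{2}t^{2}\}$, the characteristic function of $\mathcal{N}(0,\mu^{2})$, which is continuous at the origin; Theorem 2.1 then delivers the claimed convergence in distribution. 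Under (v) the numbers $\frac{1}{n}\sum_{k}m_{k}\sigma_{k}^{2}$ lie in the compact set $[0,\eta^{2}]$, so by Bolzano--Weierstrass some subsequence $\{n_{j}\}$ satisfies $\frac{1}{n_{j}}\sum_{k}m_{k}\sigma_{k}^{2}\to\zeta^{2}$ with $\zeta\in[0,\eta]$, and rerunning the computation along $\{n_{j}\}$ yields convergence to $\mathcal{N}(0,\zeta^{2})$.

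The hard part will be controlling the remainder $\frac{t^{2}}{n}\sum_{k=1}^{h}m_{k}\varepsilon_{k}(t/\sqrt{n})$. Since both the number of blocks $h=h(n)$ and the multiplicities $m_{k}$ grow with $n$, the pointwise decay $\varepsilon_{k}(s)\to0$ is not sufficient on its own; what is needed is uniformity in $k$, namely $\sup_{k}|\varepsilon_{k}(s)|\to0$ as $s\to0$. Granting this, the constraint $\sum_{k}m_{k}=n$ gives
\[
\left|\frac{t^{2}}{n}\sum_{k=1}^{h}m_{k}\varepsilon_{k}\!\left(\frac{t}{\sqrt{n}}\right)\right|\le t^{2}\sup_{k}\left|\varepsilon_{k}\!\left(\frac{t}{\sqrt{n}}\right)\right|\longrightarrow0,
\]
so the remainder is negligible. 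Pinning down this uniform $o(t^{2})$ control, and confirming that it is exactly what hypothesis (i) is meant to provide, is the crux of the argument; everything else reduces to the elementary limit manipulations above together with a single appeal to the continuity theorem.
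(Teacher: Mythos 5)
Your proposal is correct and follows essentially the same route as the paper's proof: rescale by $t/\sqrt{n}$, expand each $\lambda_{k}$, cancel the centering phase, pass to the limit of the characteristic function, invoke the continuity theorem (Theorem 2.1), and use Bolzano--Weierstrass to extract the subsequence in case (v). The only substantive differences are to your credit: you explicitly flag that the $o(t^{2})$ error in hypothesis (i) must be uniform in $k$ (since $h=h(n)$ and the $m_{k}$ grow with $n$), a point the paper's proof silently elides when it merges the per-block errors into $o\left(t^{2}\sum_{k=1}^{h}m_{k}/n\right)$, and your subsequence argument goes directly from compactness of $[0,\eta^{2}]$ to the conclusion, avoiding the paper's unnecessary detour through the Hausdorff distance between the sets $R$ and $R_{1}$.
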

\begin{proof}
	First, taking $t = 0$, we obtain that $\prod_{k=1}^{h}c\left ( 0 \right ) \lambda_{k}\left ( 0 \right )^{m_{k}} =1-d_{n}\left ( t \right )\to 1 $ while $\lambda_{k}\left (0 \right ) =1$ for $k=1,\cdots,h$, then $c\left ( 0 \right )=1 $. Next, by considering $t/\sqrt{n}$ instead of $t$, we obtain from (i) that
	\begin{align*}
		&\lambda_{k}\left ( \frac{t}{\sqrt{n}} \right )=\mathrm{exp}\left \{ i \frac{tA_{k} }{\sqrt{n}}- \frac{t^{2}\sigma^{2}_{k}}{2n} +o\left ( \frac{t^{2}}{n} \right )  \right \} ,
	\end{align*}
	so that
	\begin{align*}
		E\left (\mathrm{exp}\left \{ \frac{i tQ_{n}}{\sqrt{n}} \right \}\right )&=\prod_{k=1}^{h}c\left ( \frac{t}{\sqrt{n}} \right )\lambda_{k}\left ( \frac{t}{\sqrt{n}} \right )^{m_{k}} +d_{n}\left ( \frac{t}{\sqrt{n}} \right )\\
		&=\prod_{k=1}^{h}c\left ( \frac{t}{\sqrt{n}} \right )\cdot \mathrm{exp}\left \{ i\frac{tA_{k}m_{k}}{\sqrt{n}}-\frac{t^{2}\sigma^{2}_{k}m_{k}}{2n}+o\left ( \frac{t^{2}m_{k}}{n} \right )  \right \}+d_{n}\left ( \frac{t}{\sqrt{n}} \right ) \\
		&=c\left ( \frac{t}{\sqrt{n}} \right )\cdot \mathrm{exp}\left \{ i\frac{tA_{k}\cdot\sum_{k=1}^{h}m_{k}}{\sqrt{n}}-\frac{t^{2}\sigma^{2}_{k}\cdot\sum_{k=1}^{h}m_{k}}{2n} \right \}\\
		&\qquad\qquad\qquad\qquad\qquad\qquad\qquad\times \mathrm{exp}\left \{o\left ( \frac{t^{2}\sum_{k=1}^{h}m_{k}}{n} \right )    \right \} +d_{n}\left ( \frac{t}{\sqrt{n}} \right ),
	\end{align*}
	when $n$ goes to infinity,
	\begin{align*}
		E\left ( \mathrm{exp}\left \{ \frac{it}{\sqrt{n}}\left ( Q_{n}-\sum_{k=1}^{h}m_{k}A_{k} \right ) \right \}   \right )\to \mathrm{exp}\left \{-\frac{\mu^{2}t^{2}}{2}  \right \},\tag{3.1}
	\end{align*}
	which coincides with the characteristic function of the Gaussian distribution. Note that, due to the continuity of $c,\lambda $ and $d_{n}$ with respect to $t$, the left-hand side of (3.1) remains continuous within a certain neighborhood of the origin. Therefore, Theorem 2.1 implies that, provided  (iv) is fulfilled,  $\left ( Q_{n}-\sum_{k=1}^{h}m_{k}A_{k} \right ) /\sqrt{n}$ converges to a Gaussian distribution $\mathcal{N}\left ( 0,\mu^{2} \right ) $. As for the case of (v), for simplicity, we denote $\sum_{k=1}^{h}m_{k}\sigma_{k}^{2}/n$ as $x_{n}$, then by constructing a set $R$ such that
	\begin{align*}
		R=\left \{ x_{n}:x_{n} \le \eta^{2},\ \forall n \in \mathbb{N} \right \}, 
	\end{align*}
	we conclude from the Bolzano-Weierstrass theorem that there exists a convergent subsequence $\left \{ x_{n_{j}} \right \}_{j \in \mathbb{N}}$ of $\left \{x_{n}  \right \}_{n \in \mathbb{N}} $ and a positive constant $\zeta$ such that $	\lim_{j \to \infty}x_{n_{j}}=\zeta^{2}$. Meanwhile, we introduce a new set $R_{1}$ defined as
	\begin{align*}
		R_{1}=\left \{ x_{n_{j}}:	\lim_{j \to \infty}x_{n_{j}}=\zeta^{2},\ \forall n_{j} \in \mathbb{N} \right \}, 
	\end{align*}
	and consider the convergence of the Hausdorff distance $H\left ( R,R_{1} \right ) $ between $R$ and $R_{1}$,
	\begin{align*}
		\lim_{n \to \infty}H\left ( R,R_{1} \right )&=\lim_{n \to \infty}\max\left \{ h\left ( R,R_{1} \right ),h\left (R_{1},R  \right )   \right \}  \\
		&=\lim_{n \to \infty}\max\left \{  \max_{a \in R}\left \{\min_{b \in R_{1}}\left |a-b \right |   \right \} ,\max_{b \in R_{1}}\left \{ \min_{a \in R}\left|b-a  \right|  \right \}   \right \} =0,
	\end{align*}			
	so that for positive integers $n_{j}\in \mathbb{N}$ and $ h_{j}=h\left (  n_{j}\right )$ satisfying  $\sum_{k=1}^{h_{j}}m_{k}=n_{j}$, 
	\begin{align*}
		\lim_{n \to \infty}E\left ( \mathrm{exp}\left \{ \frac{it}{\sqrt{n_{j}}}\left ( Q_{n_{j}}-\sum_{k=1}^{h_{j}}m_{k}A_{k} \right ) \right \} \right ) = \mathrm{exp}\left \{ -\frac{\zeta^{2}t^{2}}{2} \right \}.
	\end{align*}
	Therefore, $\left ( Q_{n_{j}}-\sum_{k=1}^{h_{j} }m_{k} A_{k}\right ) /\sqrt{n_{j}}$ converges to a Gaussian distribution $\mathcal{N}\left ( 0,\zeta^{2} \right ) $. Similarly, it can be checked that
	\begin{align*}
		\lim_{n \to \infty}E\left (\mathrm{exp}\left \{ \frac{it}{n}\left ( Q_{n}-\sum_{k=1}^{h}m_{k}A_{k} \right )  \right \} \right )  =1\ \ \mathrm{and}\ \ 	\lim_{n_{j} \to \infty}E\left (\mathrm{exp}\left \{ \frac{it}{n_{j}}\left ( Q_{n}-\sum_{k=1}^{h_{j}}m_{k}A_{k} \right )  \right \} \right )  =1,
	\end{align*}
	that is, $\left ( Q_{n}-\sum_{k=1}^{h}m_{k}A_{k} \right ) /n $ and $\left ( Q_{n_{j}}-\sum_{k=1}^{h_{j}}m_{k}A_{k} \right ) /n$ both converges to 0 as $n$ goes to infinity.
	Our proof is complete.
\end{proof}

\begin{remark}
	Notice that there are no restrictions on the independence of the sequence $\left \{Y_{i}\right \}$. Therefore, the previous conclusions naturally hold when the random variables are mutually independent.
\end{remark}

\subsection{Higher-dimensional discrete time case}
Up to now, we have obtained results about the central limit theorem for sums of real random variables. In this paragraph, we extend our analysis to a more general situation, i.e., a sequence of $\mathbb{R}^{d}$-valued random variables $\left \{X_{i},i=1,\cdots,n  \right \} $ in $L^{2}$. We denote each variable $X_{i}$ as $\left ( X_{i1},\cdots,X_{id} \right )^{\mathrm{T}}\in \mathbb{R}^{d} $ and the sum of the first $n$ terms as $H_{n}$.

\setcounter{theorem}{0} 
\renewcommand{\thetheorem}{3.2}
\begin{theorem}
	Assume that there exist a neighbourhood $\mathcal{O}$ of $\mathbf{0}$ in $\mathbb{R}^{d}$ and functions $c\left ( t \right ) $, $\lambda_{k}\left ( t \right ) $ and $d_{n}\left ( t \right ) $ defined on $\mathcal{O} $, such that for all $t \in \mathcal{O}$ and $n \in \mathbb{N}$,
	\begin{align*}
		E\left (\mathrm{exp}\left \{ i\left \langle t,H_{n} \right \rangle\right \} \right )=\prod_{k=1}^{h }c\left ( t \right )\lambda_{k}\left ( t \right )^{m_{k}} +d_{n}\left ( t \right ) ,
	\end{align*}
	where $h=h\left ( n \right ) $ is a positive integer and $\left \{ m_{k} \right \}_{k=1,\cdots,h} $ satisfies $
	m_{1}+\cdots+m_{h}=n$. 
	Moreover, assume that
	
	\noindent (i) There exist $A_{k}$ and $\sigma_{k}$, $k=1,\cdots,h$, in $\mathbb{R}^{d}$, \color{black} such that when $t_{i} \to 0$ for $i=1,\cdots,d$,
	\begin{align*}
		\lambda_{k}\left ( t \right )=\mathrm{exp}\left \{ i\left \langle t,A_{k} \right \rangle -\frac{t^{\mathrm{T}}\sigma_{k}^{2}t}{2} +o\left ( t^{\mathrm{T}}t  \right )  \right \} .
	\end{align*}  
	(ii) The function $c$ is continuous at $\mathbf{0}$.
	
	\noindent (iii) The quantity $\left \| d_{n} \right \| _{L^{\infty}\left (\mathcal{O}\right )}$ tends to 0 when $n$ tends to infinity,
	
	\noindent and one of the following two additional assumptions is set up.
	
	\noindent (iv) Every element of the $d\times d$ matrix $\sum_{k=1}^{h}m_{k}\sigma_{k}^{2}/n$ has a non-negative limit as $n$ goes to infinity and all the corresponding limits form a new $d \times d$ matrix $\mu^{2}$.
	
	\noindent (v) Every element of the $d\times d$ matrix $\sum_{k=1}^{h}m_{k}\sigma_{k}^{2}/n$ has a non-negative bound, which leads to a new matrix $\eta^{2}$ consisted of all the corresponding bounds.
	
	\noindent Then when $n$ tends to infinity, under (i)-(iv), $\left (H_{n}-\sum_{k=1}^{h}m_{k}A_{k}  \right )/\sqrt{n}$ converges to a Gaussian distribution $\mathcal{N}\left (\mathbf{0},\mu^{2}  \right ) $,  while under (i)-(iii) and (v), there exist a $d \times d$ matrix $\zeta^{2} $, a sequence $\left \{ n_{j} \right \}_{j \in \mathbb{N}}\subset \left \{ n \right \}$ and a positive integer $h_{j}=h\left ( n_{j} \right ) $, such that every element of $\zeta^{2}$ is bounded by the corresponding element of $\eta^{2}$ and  $\left (H_{n_{j}}-\sum_{k=1}^{h_{j}}m_{k}A_{k}  \right )/\sqrt{n_{j}}$ converges to a Gaussian distribution $\mathcal{N}\left (\mathbf{0},\zeta^{2} \right ) $.
\end{theorem}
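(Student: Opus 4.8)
The plan is to reproduce the characteristic-function argument of Theorem 3.1 almost verbatim, with the scalar product $tQ_n$ replaced by the inner product $\langle t, H_n\rangle$ and the scalar variance replaced by the quadratic form $t^{\mathrm{T}}\sigma_k^2 t$. First I would evaluate the hypothesized identity at $t=\mathbf{0}$. Since $\lambda_k(\mathbf{0})=1$ for every $k$ and $\|d_n\|_{L^\infty(\mathcal{O})}\to 0$ by (iii), the product $\prod_{k=1}^h c(\mathbf{0})$ is forced to tend to $1$, whence $c(\mathbf{0})=1$; combined with the continuity assumption (ii) this gives $c(t/\sqrt{n})\to 1$ as $n\to\infty$ for each fixed $t\in\mathcal{O}$.

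Next I would substitute $t/\sqrt{n}$ for $t$ (legitimate for all large $n$, since $t/\sqrt{n}\in\mathcal{O}$ eventually) and insert the local expansion from (i). Using $\langle t/\sqrt{n}, A_k\rangle=\langle t, A_k\rangle/\sqrt{n}$ and $(t/\sqrt{n})^{\mathrm{T}}\sigma_k^2(t/\sqrt{n})=t^{\mathrm{T}}\sigma_k^2 t/n$, the factor $\lambda_k(t/\sqrt{n})^{m_k}$ becomes $\exp\{i\langle t, A_k\rangle m_k/\sqrt{n}-t^{\mathrm{T}}\sigma_k^2 t\,m_k/(2n)+o(m_k t^{\mathrm{T}}t/n)\}$. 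Multiplying over $k$ and invoking $\sum_k m_k=n$, the linear terms aggregate into $i\langle t, \sum_k m_k A_k\rangle/\sqrt{n}$ and the quadratic terms into $-\tfrac{1}{2}t^{\mathrm{T}}\big(\sum_k m_k\sigma_k^2/n\big)t$, while the accumulated error is $o(t^{\mathrm{T}}t\cdot\sum_k m_k/n)=o(t^{\mathrm{T}}t)$ and hence vanishes. After centering by $\sum_k m_k A_k$ and letting $n\to\infty$, under (iv) every entry of $\sum_k m_k\sigma_k^2/n$ tends to the corresponding entry of $\mu^2$, so $E(\exp\{\tfrac{i}{\sqrt{n}}\langle t, H_n-\sum_k m_k A_k\rangle\})\to\exp\{-\tfrac{1}{2}t^{\mathrm{T}}\mu^2 t\}$, the characteristic function of $\mathcal{N}(\mathbf{0},\mu^2)$. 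Since the left-hand side is continuous near $\mathbf{0}$, the $\mathbb{R}^d$ form of Theorem 2.1 (equivalently, the Cram\'{e}r--Wold device applied with Theorem 3.1 to each scalar $\langle t, H_n\rangle$) delivers convergence to $\mathcal{N}(\mathbf{0},\mu^2)$.

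For the case (i)--(iii) together with (v), the matrices $x_n:=\sum_k m_k\sigma_k^2/n$ need not converge; only each of their entries is bounded by the corresponding entry of $\eta^2$. Here I would extract a single subsequence along which the whole matrix converges: there are only $d^2$ entries, and each is a bounded real sequence, so applying the Bolzano--Weierstrass theorem entrywise finitely many times (passing to successive nested subsequences) yields $\{n_j\}_{j\in\mathbb{N}}$ with $x_{n_j}\to\zeta^2$ entrywise, where every entry of $\zeta^2$ is bounded by the corresponding entry of $\eta^2$. Running the computation of the previous paragraph along $\{n_j\}$ produces the limit $\exp\{-\tfrac{1}{2}t^{\mathrm{T}}\zeta^2 t\}$, and Theorem 2.1 again gives that $(H_{n_j}-\sum_{k=1}^{h_j}m_k A_k)/\sqrt{n_j}$ converges to $\mathcal{N}(\mathbf{0},\zeta^2)$.

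The step needing the most care is the passage to the Gaussian limit: I must ensure that $\mu^2$ (resp. $\zeta^2$) is a genuine covariance, i.e. symmetric and positive semidefinite, so that $\exp\{-\tfrac{1}{2}t^{\mathrm{T}}\mu^2 t\}$ truly is a Gaussian characteristic function. This is automatic here, because the standing convention $\sigma_k^2=\sigma_k\sigma_k^{\mathrm{T}}$ makes each summand positive semidefinite, hence every $x_n$ is positive semidefinite, and this property is preserved under entrywise limits since the positive semidefinite cone is closed. A secondary subtlety, exactly as in the one-dimensional proof, is the aggregation of the error terms: the hypothesis (i) must be read as furnishing one $o(\cdot)$ valid uniformly across the family $\{\lambda_k\}$, so that $\sum_k o(m_k t^{\mathrm{T}}t/n)=o(t^{\mathrm{T}}t)$; with that reading, the remaining manipulations are routine.
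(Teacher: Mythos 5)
Your proposal is correct and follows essentially the same route as the paper's proof: evaluate at $t=\mathbf{0}$ to get $c(\mathbf{0})=1$, rescale $t\mapsto t/\sqrt{n}$, insert the expansion from (i), and invoke the continuity theorem (Theorem 2.1), with a Bolzano--Weierstrass subsequence extraction handling case (v). If anything, your treatment is slightly more careful than the paper's at two points it glosses over --- the nested entrywise subsequence extraction that makes the whole matrix $\sum_k m_k\sigma_k^2/n$ converge, and the verification that the limit $\mu^2$ (resp.\ $\zeta^2$) is symmetric positive semidefinite and hence a legitimate covariance --- but these are refinements of the same argument, not a different one.
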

\begin{proof}
	First, taking $t = \mathbf{0}$, we obtain that
	$\lambda_{k}\left (\mathbf{0} \right ) =1$ for $k=1,\cdots,h$, and, correspondingly,
	\begin{align*}
		\lim_{n \to \infty}\prod_{k=1}^{h}c\left ( \mathbf{0} \right ) \lambda_{k}\left ( \mathbf{0} \right )^{m_{k}} =1-	\lim_{n \to \infty}d_{n}\left ( t \right )=1,
	\end{align*}  which indicates that $c\left (\mathbf{0} \right )=1 $. Next, similar to Theorem 3.1, 
	\begin{align*}
		&\lambda_{k}\left ( \frac{t}{\sqrt{n}} \right )=\mathrm{exp}\left \{ i\left \langle \frac{t}{\sqrt{n}},A_{k} \right \rangle -\frac{t^{\mathrm{T}}\sigma_{k}^{2}t}{2n}+o\left ( \frac{t^{\mathrm{T}}t }{n} \right )  \right \}  ,
	\end{align*}
	so that for $t \in \mathcal{O}$,
	\begin{align*}
		E\left (\mathrm{exp}\left \{ \frac{i\left \langle t,H_{n} \right \rangle}{\sqrt{n}} \right \} \right )&=\prod_{k=1}^{h}c\left ( \frac{t}{\sqrt{n}} \right )\lambda_{k}\left ( \frac{t}{\sqrt{n}} \right )^{m_{k}} +d_{n}\left ( \frac{t}{\sqrt{n}} \right )\\
		&=\prod_{k=1}^{h}c\left ( \frac{t}{\sqrt{n}} \right ) \mathrm{exp}\left \{ i\left \langle \frac{t}{\sqrt{n}},m_{k}A_{k} \right \rangle -\frac{t^{\mathrm{T}}m_{k}\sigma_{k}^{2}t}{2n}+o\left ( \frac{m_{k}t^{\mathrm{T}}t}{n} \right )  \right \} +d_{n}\left ( \frac{t}{\sqrt{n}} \right )\\
		&=c\left ( \frac{t}{\sqrt{n}} \right )\mathrm{exp}\left \{ i\left \langle \frac{t}{\sqrt{n}}, \sum_{k=1}^{h}m_{k}A_{k} \right \rangle -\frac{t^{\mathrm{T}} \left (  \sum_{k=1}^{h}m_{k}\sigma_{k}^{2} \right )t}{2n} \right \}\\
		&\qquad\qquad\qquad\qquad\qquad\qquad\qquad\times \mathrm{exp}\left \{ o\left ( \frac{\sum_{k=1}^{h}m_{k}t^{\mathrm{T}}t}{n} \right ) \right \}   +d_{n}\left ( \frac{t}{\sqrt{n}} \right ).
	\end{align*}
	This implies that when $n$ goes to infinity,
	\begin{align*}
		E\left ( \mathrm{exp}\left \{ \frac{i\left \langle t, H_{n}-\sum_{k=1}^{h}m_{k}A_{k} \right \rangle }{\sqrt{n}} \right \} \right ) \to  \mathrm{exp}\left \{ \frac{-t^{\mathrm{T}}\mu^{2} t}{2} \right \}.
	\end{align*}
 Using again Theorem 2.1, if (iv) is satisfied, then $\left ( H_{n}-\sum_{k=1}^{h}m_{k}A_{k} \right ) /\sqrt{n}$ converges to a Gaussian distribution $\mathcal{N}\left ( \mathbf{0},\mu^{2} \right ) $.  If (v) is satisfied, then 
	we can find a sequence $\left \{ n_{j} \right \}_{j \in \mathbb{N}}\subset \left \{ n \right \}$, a positive integer $h_{j}=h\left ( n_{j} \right ) $ and a $d \times
	d$ matrix $\zeta^{2}$, such that $\left (H_{n_{j}}-\sum_{k=1}^{h_{j}}m_{k}A_{k}  \right )/\sqrt{n_{j}}$ converges to a Gaussian distribution $\mathcal{N}\left (\mathbf{0},\zeta^{2} \right ) $,  where every element of $\zeta^{2}$ is bounded by the corresponding element of $\eta^{2}$. Finally, the convergence
	\begin{align*}
		\lim_{n \to \infty}E\left ( \mathrm{exp}\left \{ \frac{i\left \langle t,  H_{n}-\sum_{k=1}^{h}m_{k}A_{k}\right \rangle}{n} \right \} \right )=\lim_{n_{j} \to \infty}E\left ( \mathrm{exp}\left \{ \frac{i\left \langle t,  H_{n_{j}}-\sum_{k=1}^{h_{j}}m_{k}A_{k}\right \rangle}{n_{j}} \right \} \right )=1 
	\end{align*}%which indicates that every element of $\left ( H_{n}-\sum_{k=1}^{h}m_{k} A_{k}\right ) /n$ converges to 0. 
	can be also established.
\end{proof}

\subsection{Continuous time case}
Now we turn to study the corresponding case with continuous time. Specifically, we consider an $\mathbb{R}^{d}$-valued stochastic \color{black} process $\left \{ x\left (s  \right ) \right \}_{s \ge 0}$ in $L^{2}$ and the associated integral $	\mathcal{S}\left ( T \right ) =\int_{0}^{T}x\left (s  \right ) ds,T\ge0 $.
\setcounter{theorem}{0} 
\renewcommand{\thetheorem}{3.3}
\begin{theorem}
	Assume that there exist a neighbourhood $\mathcal{O}$ of $\mathbf{0}$ in $\mathbb{R}^{d}$ and functions $c\left ( t \right ) $, $\lambda_{k}\left ( t \right ) $ and $d_{T}\left ( t \right ) $ defined on $\mathcal{O}$, such that for all $t \in \mathcal{O}$ and $T\ge 0$, 
	\begin{align*}
		E\left (\mathrm{exp}\left \{ i\left \langle t,\mathcal{S}_{T} \right \rangle \right \} \right )=c\left ( t \right )\mathrm{exp}\left \{\int_{1}^{h}m_{s}\ln\lambda_{s}\left ( t \right ) ds  \right \} +d_{T}\left ( t \right ) ,
	\end{align*}
	where $h=h\left ( T \right ) $ is a positive integer and $\left \{ m_{k} \right \}_{k \in \left [1,h\right]   }$ satisfies $\int_{1}^{h}m_{k}dk=T$.
	Moreover, assume that:
	
	\noindent (i) There exist $A_{s}$ and $\sigma_{s}$, $s \in \left [1,h \right ] $, in $\mathbb{R}^{d}$, \color{black} such that when $t_{i} \to 0$ for $i=1,\cdots,d $,
	\begin{align*}
		\lambda_{s}\left (t \right )=\mathrm{exp}\left \{ i\left \langle t, A_{s}\right \rangle -\frac{t^{\mathrm{T}}\sigma_{s}^{2}t}{2}+o\left ( t^{\mathrm{T}}t \right )  \right \} .
	\end{align*}  
	
	\noindent (ii) The function $c$ is continuous at $\mathbf{0}$.
	
	\noindent (iii) The quantity $\left \| d_{T} \right \| _{L^{\infty}\left (\mathcal{O}\right ) }$ tends to 0 when $T$ tends to infinity,
	
	\noindent and one of the following two additional assumptions is set up.

	\noindent (iv) Every element of the $d\times d$ matrix $\int_{1}^{h}m_{s}\sigma^{2}_{s}ds/T$ has a non-negative limit as $T$ goes to infinity and all the corresponding limits form a new $d \times d$ matrix $\mu^{2}$.
	
	\noindent (v) Every element of the $d\times d$ matrix $\int_{1}^{h}m_{s}\sigma^{2}_{s}ds/T$  has a non-negative bound, which leads to a new matrix $\eta^{2}$ consisted of all the corresponding bounds.
	
	\noindent Then when $T$ tends to infinity, under (i)-(iv), $\left ( \mathcal{S}_{T}-\int_{1}^{h}m_{s}A_{s}ds \right ) /\sqrt{T}$ converges to a Gaussian distribution $\mathcal{N}\left (\mathbf{0},\mu^{2}  \right ) $,  while under (i)-(iii) and (v),
	there exists a $d \times d$ matrix $\zeta^{2} $, a sequence $\left \{ T_{j} \right \}_{j \in \mathbb{N} } \subset \left \{ T\right \}$ and a positive integer $h_{j}=h\left ( T_{j} \right ) $, such that every element of $\zeta^{2}$ is bounded by the corresponding element of $\eta^{2}$ and $\left ( \mathcal{S}_{T_{j}}-\int_{1}^{h_{j}}m_{s}A_{s}ds \right ) /\sqrt{T_{j}}$ converges to a Gaussian distribution $\mathcal{N}\left (\mathbf{0},\zeta^{2} \right ) $.
\end{theorem}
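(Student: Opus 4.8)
The plan is to follow the template already established in the proofs of Theorems 3.1 and 3.2, replacing the finite products and sums by their continuous (exponential--integral) counterparts, so that the only genuinely new work lies in controlling the continuous remainder term. First I would evaluate the hypothesised identity at $t=\mathbf 0$: since $\lambda_s(\mathbf 0)=1$ for every $s\in[1,h]$, the exponent $\int_1^h m_s\ln\lambda_s(\mathbf 0)\,ds$ vanishes, the left-hand side equals $1$, and letting $T\to\infty$ together with assumption (iii) forces $c(\mathbf 0)=1$.

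Next I would rescale, substituting $t/\sqrt T$ for $t$ and invoking the asymptotic expansion (i):
$$\ln\lambda_s\left(\frac{t}{\sqrt T}\right)=\frac{i}{\sqrt T}\langle t,A_s\rangle-\frac{t^{\mathrm T}\sigma_s^2 t}{2T}+o\left(\frac{t^{\mathrm T}t}{T}\right).$$
Integrating against $m_s\,ds$ and using the normalisation $\int_1^h m_k\,dk=T$, the drift contributes $\frac{i}{\sqrt T}\langle t,\int_1^h m_sA_s\,ds\rangle$, the quadratic part contributes $-\frac{1}{2T}\,t^{\mathrm T}\left(\int_1^h m_s\sigma_s^2\,ds\right)t$, and the remainder is $\int_1^h m_s\,o(t^{\mathrm T}t/T)\,ds$. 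After centering at $\int_1^h m_sA_s\,ds$ and letting $T\to\infty$, the factor $c(t/\sqrt T)\to c(\mathbf 0)=1$ by (ii), the error $d_T(t/\sqrt T)\to 0$ by (iii), and (iv) gives $\frac 1T\int_1^h m_s\sigma_s^2\,ds\to\mu^2$, so that the characteristic function of $(\mathcal S_T-\int_1^h m_sA_s\,ds)/\sqrt T$ converges pointwise to $\exp\{-t^{\mathrm T}\mu^2 t/2\}$. Since this limit is the characteristic function of $\mathcal N(\mathbf 0,\mu^2)$ and is continuous at the origin, Theorem 2.1 delivers convergence in distribution, proving the claim under (i)--(iv).

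For the boundedness alternative (v), I would observe that every entry of the matrix $\frac 1T\int_1^h m_s\sigma_s^2\,ds$ is real and dominated by the corresponding entry of $\eta^2$, so the $\mathbb R^{d\times d}$-valued family indexed by $T$ is bounded. By the Bolzano--Weierstrass theorem applied entrywise (a finite chain of extractions, since there are only $d^2$ entries) I would select a sequence $\{T_j\}\subset\{T\}$ along which this matrix converges to a limit $\zeta^2$ whose entries are dominated by those of $\eta^2$, and then rerun the computation above verbatim along $T_j$ to get convergence of $(\mathcal S_{T_j}-\int_1^{h_j}m_sA_s\,ds)/\sqrt{T_j}$ to $\mathcal N(\mathbf 0,\zeta^2)$. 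The degenerate normalisation by $T$ instead of $\sqrt T$ is handled identically: the exponent then carries a prefactor $1/T$ (respectively $1/\sqrt T$) that sends both the drift and the quadratic contributions to $0$, so the characteristic function tends to $1$ and the rescaled quantity converges to $\mathbf 0$.

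The step I expect to be the main obstacle is the control of the continuous remainder $\int_1^h m_s\,o(t^{\mathrm T}t/T)\,ds$: to conclude that it vanishes one needs the little-$o$ in (i) to be \emph{uniform} in the index $s\in[1,h]$, since the mass $\int_1^h m_s\,ds=T$ grows with $T$, and integrating a nonuniform error against $m_s\,ds$ need not be negligible. Granting this uniformity---implicit already in the discrete arguments, where $\sum_k m_k\,o(t^{\mathrm T}t/n)$ is treated collectively---one writes $\ln\lambda_s(t/\sqrt T)$ with remainder $r_s$ satisfying $\sup_s|r_s(t/\sqrt T)|\le \varepsilon(t/\sqrt T)\,t^{\mathrm T}t/T$ with $\varepsilon(u)\to 0$ as $u\to\mathbf 0$; then $\int_1^h m_s|r_s|\,ds\le \varepsilon(t/\sqrt T)\,t^{\mathrm T}t$, which tends to $0$ for each fixed $t$ as $T\to\infty$. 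With the remainder thus shown to be harmless, the rest of the argument is a routine transcription of the discrete proofs.
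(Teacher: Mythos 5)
Your proposal is correct and follows essentially the same route as the paper's proof: evaluate at $t=\mathbf{0}$ to get $c(\mathbf{0})=1$, rescale $t\mapsto t/\sqrt{T}$, expand $\ln\lambda_{s}$ via (i), use the normalisation $\int_{1}^{h}m_{k}\,dk=T$ together with (ii)--(iv) and the continuity theorem (Theorem 2.1), and under (v) extract a convergent subsequence by Bolzano--Weierstrass before repeating the computation along $\{T_{j}\}$. In fact your treatment is slightly more careful than the paper's on two points it leaves implicit --- the entrywise (finite-chain) subsequence extraction for the $d\times d$ matrix, and the requirement that the $o\left ( t^{\mathrm{T}}t\right )$ term in (i) be uniform in $s\in\left [ 1,h \right ]$ so that $\int_{1}^{h}m_{s}\,o\!\left ( t^{\mathrm{T}}t/T \right )ds$ is genuinely negligible.
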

\begin{proof}
	First, taking $t = \mathbf{0}$, we obtain that $c\left ( \mathbf{0} \right ) =1-d_{T}\left ( \mathbf{0} \right )\to 1 $ as $\lambda_{s}\left (\mathbf{0} \right ) =1$ for $s \in \left [1,h \right ] $, then $c\left ( \mathbf{0} \right )=1 $. Next, we use the asymptotic expansion to $\lambda_{s}\left ( t \right ) $ and obtain 
	\begin{align*}
		E\left (\mathrm{exp}\left \{ \frac{i\left \langle t,\mathcal{S}_{T} \right \rangle}{\sqrt{T}} \right \} \right )&=c\left ( \frac{t}{\sqrt{T}} \right )\mathrm{exp}\left \{ \int_{1}^{h}m_{s}\ln \lambda_{s}\left (  \frac{t}{\sqrt{T}}\right )ds\right \}  +d_{T}\left ( \frac{t}{\sqrt{T}} \right ) \\
		&=c\left ( \frac{t}{\sqrt{T}} \right )\mathrm{exp}\left \{ i\left \langle \frac{t}{\sqrt{T}},\int_{1}^{h}m_{s}A_{s}ds \right \rangle -\frac{t^{\mathrm{T}}\left ( \int_{1}^{h}m_{s}\sigma_{s}^{2}ds \right ) t}{2T}\right \}\\
		&\qquad\qquad\qquad\qquad\qquad\qquad\qquad \times\mathrm{exp}\left\{ o\left (\frac{ t^{\mathrm{T}}t\int_{1}^{h}m_{s}ds }{T}\right )   \right \}     +d_{T}\left ( \frac{t}{\sqrt{T}} \right ) .
	\end{align*}
	Therefore, when $T$ goes to infinity,
	\begin{align*}
		E\left ( \mathrm{exp}\left \{ \frac{i\left \langle t, \mathcal{S}_{T}-\int_{1}^{h}m_{s}A_{s}ds \right \rangle}{\sqrt{T}} \right \} \right )\to \mathrm{exp}\left \{ -\frac{t^{\mathrm{T}}\mu^{2}t}{2} \right \}.
	\end{align*}
	Under (i)-(iii), if (iv) is moreover satisfied, then $\left ( \mathcal{S}_{T}-\int_{1}^{h}m_{s}A_{s}ds\right )/\sqrt{T}$ converges to a Gaussian distribution $\mathcal{N}\left ( \mathbf{0},\mu^{2} \right ) $. If we substitute (iv) with (v), then by finding a sequence $\left \{ T_{j} \right \}_{j\in \mathbb{N}}\subset \left \{ T \right \}  $, a positive integer $h_{j}=h\left (T_{j}  \right ) $ and a matrix $\zeta^{2}$,
	such that every element of $\int_{0}^{h_{j}}m_{k}\sigma^{2}_{k}dk/T_{j}$ converges to the corresponding element of  $\zeta^{2}$ and is bounded by the corresponding element of $\eta^{2}$, we conclude that $\left ( \mathcal{S}_{T_{j}}-\int_{1}^{h_{j}}m_{s}A_{s}ds\right )/\sqrt{T_{j}}$ converges to a Gaussian distribution $\mathcal{N}\left ( \mathbf{0},\zeta^{2} \right ) $.	Finally, it can be proved that
	\begin{align*}
		E\left ( \mathrm{exp}\left \{ \frac{i\left \langle t, \mathcal{S}_{T}-\int_{1}^{h}m_{s}A_{s}ds \right \rangle}{T} \right \} \right )\to 1\ \ \mathrm{and}\ \ E\left ( \mathrm{exp}\left \{ \frac{i\left \langle t, \mathcal{S}_{T_{j}}-\int_{1}^{h_{j}}m_{s}A_{s}ds \right \rangle}{T_{j}} \right \} \right )\to 1,
	\end{align*}
	as $T$ goes to infinity.
\end{proof}

\section{Applications}
In this section, we focus on studying the integrable Hamiltonian systems with stochastic perturbations, 
in which the noise is Gaussian or non-Gaussian. For the sake of convenience, we'll consistently express the process $x\left ( T \right ) $ in the form of $x_{T}$ for time $T \ge 0$. 

\subsection{One-dimensional Integrable Hamiltonian systems with white noise perturbations}
First of all, we consider the following nonlinear equation
\begin{align*}
	\ddot{x}+x^{2m+1}=0,\ \ m \ge 1.\tag{4.1}
\end{align*}
Then the corresponding Hamiltonian system can be written in the following form
\begin{align*}
	\dot{x}=\frac{\partial}{\partial y}h\left ( x,y \right ) ,\ \ \ \ \dot{y}=-\frac{\partial}{\partial x}h\left ( x,y \right ), \tag{4.2}
\end{align*}
and $h\left ( x,y \right )  =y^{2}/2+x^{2m+2}/\left ( 2m+2 \right ) $. It can be seen that $h>0$ on $\mathbb{R}^{2}$ except at the only equilibrium point $\left ( x,y \right )=\left ( 0,0 \right )  $. Referring to the ideas in \cite{12} and \cite{22}, all the solutions of (4.2) are  periodic while the periods tend to zero as $h$ tend to infinity. Moreover, if we denote by $T^{\ast}>0$ its minimal period and $\left ( C\left ( t \right ),S\left ( t \right )   \right ) $ the solution of (4.2) satisfying the initial condition $\left ( C\left ( 0 \right ),S\left ( 0 \right )   \right )  = \left ( 1,0 \right ) $, then we may 
define the action-angle variables by the map $\psi:\mathbb{R}^{+}\times \mathcal{S}^{1} \to \mathbb{R}^{2}/\left \{ 0 \right \} $ where $\mathcal{S}^{1}$ is the unit circle, $\left ( x,y \right )=\psi\left ( I,\theta \right )  $ for $I>0$ and $\theta \left (\mathrm{mod}\ 1  \right ) $. To be specific,
\begin{align*}
	\psi:\ \ 	x=c^{\alpha}I^{\alpha}C\left ( \theta T^{\ast} \right ) ,\ \ \ y=c^{\beta}I^{\beta}S\left ( \theta T^{\ast} \right ), 
\end{align*}
where $\alpha=1/m+2$, $\beta=1-\alpha$ and $c=1/T^{\ast}\alpha$. Thus, the new Hamiltonian function is defined as
\begin{align*}
	h\circ \psi\left ( I,\theta \right )=\frac{c^{2\beta}}{2m+2} \cdot I^{2\beta}=\tilde{h}\left ( I\right ) ,
\end{align*} 
and (4.1) turns to
\begin{align*}
	\dot{I}=-\frac{\partial \tilde{h}}{\partial \theta}=0,\ \ \dot{\theta}=\frac{\partial \tilde{h}}{\partial I}=\frac{\beta c^{2\beta}}{m+1} \cdot I^{2\beta-1}\triangleq \omega\left ( I \right ) .\tag{4.3}
\end{align*}  

Consequently, this becomes an integrable system in the coordinates of the action-angle variable $\left (  I,\theta\right ) $. In light of this, we consider adding stochastic perturbations $\xi^{i}$ for $i=1,2$ to (4.3), which, as we mentioned earlier, is of great practical significance for the original model. Specifically, the noise here belongs to Brownian motions or $\mathrm{L\acute{e}vy}$ processes.
Therefore, it's appropriate to focus on the new system
\begin{align*}
	\dot{I}_{s}=\dot{\xi}_{s}^{1},\ \ \ \ \dot{\theta}_{s}=\omega\left (  I_{s}\right ) +\dot{\xi}^{2}_{s}.\tag{4.4}
\end{align*}  
We denote by $I_{0}$ and $\theta_{0}$ the initial values of $I_{s}$ and $\theta_{s}$, respectively, and we consider it as a general situation. We will see below that the orbits of systems (4.4) follow an asymptotic normal distribution. 

\subsubsection{Gaussian noise perturbations}
For Gaussian noise, we write (4.4) as follows
\begin{align*}
	I_{s}=I_{0}+ \sigma B^{1}_{s},\ \ \ \ 
	\theta_{s}= \theta_{0}+\int_{0}^{s}\omega \left ( I_{r} \right ) dr+\zeta B^{2}_{s},\ \ s \ge 0,
\end{align*}
where $\sigma$ and $\zeta$ are two constants, $B^{1}$ and $B^{2}$ are two one-dimensional independent standard Brownian motions defined on a filtered probability space $\left ( \Omega,\mathcal{F},\mathcal{F}_{t} ,\mathbb{P} \right ) $ satisfying the standard conditions. We denote by $\mathbb{E}$ the corresponding expectation of $\mathbb{P}$.

\setcounter{theorem}{0} 
\renewcommand{\thetheorem}{4.1}
\begin{theorem}
	There exist $\varepsilon,\delta>0$, $A_{k}\in\mathbb{R}$ and a positive integer $h=h\left ( n \right ) $,
	such that 
	\begin{align*}
		\frac{1}{\sqrt{n\delta}}\left (   \sum_{k=1}^{n}\frac{\theta_{k\delta}}{k\delta}-\sum_{k=1}^{h }m_{k\delta}A_{k\delta}\right ) 
	\end{align*} 
	converges to a Gaussian distribution $\mathcal{N}\left (0,\left ( \int_{\Omega} \omega\left (x  \right )dx  \right )^{2}  \right ) $, where $\sum_{k=1}^{h }m_{k\delta}=n\delta$.
\end{theorem}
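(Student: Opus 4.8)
The plan is to realize the statement as an application of Theorem 3.1, taking the partial sum $Q_n=\sum_{k=1}^n \theta_{k\delta}/(k\delta)$ and letting the quantity $n\delta=\sum_{k=1}^h m_{k\delta}$ play the role of the index ``$n$'' in that theorem, so that the normalization $\sqrt{n\delta}$ and the centering $\sum_{k=1}^h m_{k\delta}A_{k\delta}$ are exactly those prescribed there. First I would insert the explicit representation
\[
\theta_{k\delta}=\theta_0+\int_0^{k\delta}\omega\left(I_r\right)dr+\zeta B_{k\delta}^2,\qquad I_r=I_0+\sigma B_r^1,
\]
so that each summand $\theta_{k\delta}/(k\delta)$ decomposes into a deterministic piece $\theta_0/(k\delta)$, an ergodic time average $\frac{1}{k\delta}\int_0^{k\delta}\omega(I_r)\,dr$, and a Gaussian fluctuation $\zeta B_{k\delta}^2/(k\delta)$ whose variance $\zeta^2/(k\delta)$ tends to $0$. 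The task is then to exhibit functions $c(t)$, $\lambda_k(t)$, $d_n(t)$ and constants $A_{k\delta}$, $\sigma_k^2$ together with multiplicities $m_{k\delta}$ for which $E(\exp\{itQ_n\})$ assumes the product-plus-remainder form of Theorem 3.1.

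To generate the product structure I would use that $B^1$ and $B^2$ have independent increments. Splitting each integral $\int_0^{k\delta}\omega(I_r)\,dr$ into successive blocks of length $\delta$ and conditioning step by step on the filtration $\mathcal{F}_{j\delta}$, the characteristic function factorizes, up to a controllable error, into single-block characteristic functions; these blocks provide the factors $\lambda_k(t)^{m_{k\delta}}$, the boundary terms coming from $\theta_0$ and the normalization yield $c(t)$, and the accumulated conditioning error is collected into $d_n(t)$. The decisive ingredient is the Birkhoff ergodic theorem applied to the unperturbed action flow: it identifies the almost-sure limit $\frac{1}{k\delta}\int_0^{k\delta}\omega(I_r)\,dr\to\int_\Omega\omega(x)\,dx$, which both pins down the centering constants $A_{k\delta}$ and forces the limiting variance to equal $\mu^2=\left(\int_\Omega\omega(x)\,dx\right)^2$.

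With these identifications in hand, hypothesis (i) of Theorem 3.1 follows from a second-order Taylor expansion of the single-block characteristic function as $t\to0$, hypothesis (ii) from the continuity at the origin of the finite boundary factor $c$, and hypothesis (iii) from the joint decay of the conditioning error and of the fluctuation variances $\zeta^2/(k\delta)$. Hypothesis (iv), namely the convergence of $\frac{1}{n\delta}\sum_{k=1}^h m_{k\delta}\sigma_k^2$ to $\left(\int_\Omega\omega(x)\,dx\right)^2$, is where the Birkhoff theorem is invoked quantitatively. The step I expect to be the main obstacle is the factorization itself: since $\omega(I_r)=\omega(I_0+\sigma B_r^1)$ is a nonlinear functional of Brownian motion carrying genuine long-range temporal dependence, the characteristic function does not factorize exactly, and the real work is to show that the cumulative error is uniformly small on the neighborhood $[-\delta,\delta]$, so that $\|d_n\|_{L^\infty(-\delta,\delta)}\to0$. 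Once this uniform control is secured, Theorem 3.1 yields convergence to $\mathcal{N}\left(0,\left(\int_\Omega\omega(x)\,dx\right)^2\right)$, completing the argument.
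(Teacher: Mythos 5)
Your proposal follows the paper's scaffold quite closely: apply Theorem 3.1 to $Q_n=\sum_{k=1}^{n}\theta_{k\delta}/(k\delta)$, obtain $A_k=\mathbb{E}Z_k$ and $\sigma_k^2=\mathbb{E}Z_k^2$ from a second-order expansion of the block characteristic functions, and use the Birkhoff ergodic theorem to identify the limiting variance $\left(\int_\Omega\omega(x)\,dx\right)^2$, which is the paper's verification of hypothesis (iv). But your proposal diverges from the paper at the single step you yourself call ``the main obstacle,'' and there it stops: you never prove that the factorization error satisfies $\left\|d_n\right\|_{L^\infty}\to 0$. Hypothesis (iii) of Theorem 3.1 is exactly that statement, so the theorem cannot be invoked and what you have is a plan, not a proof. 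The paper does not confront this obstacle at all: it asserts outright that the variables $\theta_{k\delta}/(k\delta)$ are mutually independent, so that the characteristic function of the sum factorizes exactly into $\prod_{k}c(t)\lambda_k(t)^{m_k}$ with $c\equiv 1$ and $d_n\equiv 0$, after which the whole proof is the variance computation (the noise part contributing $\frac{1}{n}\sum_{k=1}^{n}\zeta^2/k=O(\ln n/n)\to 0$).

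Beyond being incomplete, the conditioning route you sketch would face a structural problem. After conditioning on $\mathcal{F}_{j\delta}$, the conditional characteristic function of the next block is a function of the random state $I_{j\delta}$, so you do not obtain a deterministic product of the required form without substantial extra work; worse, the weights $1/(k\delta)$ make every Brownian increment enter all later summands, so the cross-block correlations are not lower-order. Concretely, writing $\sum_{k=1}^{n}B_k^2/k=\sum_{j=1}^{n}\left(B_j^2-B_{j-1}^2\right)\sum_{k=j}^{n}\frac{1}{k}$ gives $\mathrm{Var}\left(\zeta\sum_{k=1}^{n}B_k^2/k\right)\sim 2\zeta^2 n$, which survives the $1/\sqrt{n\delta}$ normalization, in contrast with the $O(\ln n)$ obtained by summing individual variances as though the terms were uncorrelated. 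So the dependence you rightly worry about cannot simply be swept into a vanishing $d_n$; an honest execution of your plan would have to account for these covariances in the limit itself. Your instinct about where the difficulty sits is sound --- it is precisely the point where the paper leans on its independence assertion --- but the gap in your argument remains open.
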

\begin{proof}
	Our main idea is to construct conditions of Theorem 3.1, so that to take advantage of its conclusion.
	We choose to denote $T$ as $n\delta$, where $\delta$ is some positive constant, then we divide $\left [0,T  \right ] $ into $n$ parts with each length $\delta$, so  that when $n$ goes to infinity, $T$ goes to infinity as well.
	For simplicity, we'll take $\delta=1$ firstly and define sequences $\left \{ X_{i} \right \}_{i=1,\cdots,n}$ and $\left \{ P_{i} \right \}_{i=1,\cdots,n}$ as follows,
	\begin{align*}
		&X_{1}=\left ( \theta_{0}+\int_{0}^{1}  \omega\left ( I_{0}+\sigma B_{s}^{1} \right )ds \right ) ,\ \ P_{1}=\eta B_{1}^{2},\\
		&X_{2}=\frac{1}{2}\left ( \theta_{0}+\int_{0}^{2}  \omega\left ( I_{0}+\sigma B_{s}^{1} \right )ds \right ),\ \ P_{2}=\frac{\eta B_{2}^{2}}{2},  \\
		&\cdots\\
		&X_{n} =\frac{1}{n }\left ( \theta_{0}+\int_{0}^{n}  \omega\left ( I_{0}+\sigma B_{s}^{1} \right )ds \right ),\ \ P_{n}=\frac{\eta B_{n}^{2}}{n},
	\end{align*}
	together with
	\begin{align*}
		D_{n}=\sum_{k=1}^{n}\frac{\theta_{k}}{k}=\sum_{k=1}^{n}X_{k}+\sum_{k=1}^{n}P_{k}\triangleq H_{n}+V_{n}.		\end{align*}
	Based on the independence between different $\theta_{k}$ and $\theta_{j}$, for $\varepsilon>0$, 
	\begin{align*}
		\mathbb{E}\left ( \mathrm{exp}\left \{ i tD_{n} \right \} \right ) &=\mathbb{E}\left ( \mathrm{exp}\left \{ i t\sum_{k=1}^{n}\frac{\theta_{k}}{k} \right \} \right )=\prod_{k=1}^{n}\mathbb{E}\left ( \mathrm{exp}\left \{\frac{ i t\theta_{k}}{k}  \right \} \right ) ,\ \ t\in \left [-\varepsilon,\varepsilon  \right ] .\tag{4.5}
	\end{align*}
	By picking an integer sequence $\left \{m_{k}  \right \}_{k=1,\cdots,h} $ satisfying $\sum_{k=1}^{h}m_{k}=n$, where $h=h\left ( n \right ) $ is a positive integer, we can classify the random variable sequence $\left \{ \theta_{j}/j \right \}_{j=1,\cdots,n}$ into $h$ different groups according to the expressions of characteristic functions and represent the characteristic function of the $k$-th \textquotedblleft group\textquotedblright as $\lambda_{k}\left ( t \right )$ for $k=1,\cdots,h$, with $m_{k}$ numbers of random variables in the sequence $\left \{  X_{j}+P_{j}  \right \}_{j=1,\cdots,n}$. For simplicity, we denote the variables in $h$ groups as $Z_{1},\cdots,Z_{h}$. Apparently, they are independent of each other. Hence, 
	(4.5) equivalently turns to
	\begin{align*}
		\mathbb{E}\left ( \mathrm{exp}\left \{ i tD_{n}  \right \} \right ) =\prod_{k=1}^{h}c\left ( t \right ) \lambda_{k}\left (t  \right )^{m_{k}}, 
	\end{align*}
	where $c\left ( t \right )=1$, $\lambda_{k}\left ( t \right ) =\mathbb{E}\left (   \mathrm{exp}\left \{ i tZ_{k}  \right \}\right )$
	has an approximation
	\begin{align*}
		W_{k}
		&=1+it\mathbb{E}Z_{k}-\frac{t^{2}\mathbb{E} Z_{k}^{2}}{2} +o\left ( t^{2} \right ),\ \ t \in \left [-\varepsilon,\varepsilon  \right ], 
	\end{align*}
	that is, $\lambda_{k}\left ( t \right ) $ can be seen as
	\begin{align*}
		\lambda_{k}\left ( t \right )  =\mathrm{exp}\left \{ i t\mathbb{E}Z_{k} -\frac{t^{2}\mathbb{E} Z_{k}^{2} }{2}+o\left ( t^{2} \right )  \right \} ,\ \  t \in \left [-\varepsilon,\varepsilon  \right ].
	\end{align*}
	If we denote $A_{k}=\mathbb{E}Z_{k}$ and $ \sigma_{k}^{2}=\mathbb{E}Z_{k}^{2}$, then 
	we've successfully proved (i), (ii) and (iii) of Theorem 3.1: $c\left ( t \right ) =1$ is continuous at $t=0$, $\left \| d_{n}\left ( t \right ) \right \|_{L^{\infty}\left [-\varepsilon,\varepsilon \right ] } =0$ has the null limit as $n$ goes to infinity. Therefore, it's sufficient to check the feasibility of either (iv) or (v) of Theorem 3.1.  We'll discuss each variable $\theta_{k}/k$  for fixed $k$ firstly. Under the assumptions of independence,
	$\mathbb{E}\left ( P_{k}X_{k} \right )=\mathbb{E}P_{k}\cdot\mathbb{E}X_{k}=0$,
	so that
	\begin{align*}
		\frac{1}{n}\sum_{k=1}^{h}m_{k}\sigma_{k}^{2}&=\frac{1}{n}\sum_{k=1}^{n}	\mathbb{E}P_{k}^{2}+\frac{1}{n}\sum_{k=1}^{n}	\mathbb{E}X_{k}^{2}+\frac{2}{n}\sum_{k=1}^{n}	\mathbb{E}\left ( P_{k}X_{k} \right ) \\
		&= \frac{1}{n}\sum_{k=1}^{n}\left [ \zeta^{2}\cdot  \mathbb{E}\left (\frac{B_{k}^{2}}{k} \right )^{2} +  \mathbb{E}\left ( \frac{\theta_{0}}{k}+\frac{1}{k}\int_{0}^{k}\omega\left ( I_{0} +\sigma B_{r}^{1}\right )dr \right )^{2}  \right ]\\
		&= \frac{1}{n}\sum_{k=1}^{n}\frac{\zeta^{2}}{k}  + \frac{1}{n}\sum_{k=1}^{n} \mathbb{E}\left ( \frac{\theta_{0}}{k}+\frac{1}{k}\int_{0}^{k}\omega\left ( I_{0} +\sigma B_{r}^{1}\right )dr \right )^{2}  .
	\end{align*}
	According to the Birkhoff ergodic theorem, 
	\begin{align*}
		\frac{1}{k}\int_{0}^{k}\omega\left ( I_{0}+\sigma B_{s}^{1} \right )ds  = \int_{\Omega}\color{black}\omega\left (x  \right )dx.\tag{4.6}
	\end{align*}
	If we denote 
	\begin{align*}
		z_{k}=\left (\frac{1}{k}\int_{0}^{k}\omega\left ( I_{0}+\sigma B_{s}^{1} \right )ds  \right )^{2} -\left ( \int_{\Omega}\omega\left (x  \right )dx \right )^{2} ,
	\end{align*} 
	then with (4.6), for any $\xi>0$, there exists an integer $N_{1}\in \mathbb{N}$, such that for $k>N_{1}$ and $	\left |z_{k} \right |<\xi$,
	\begin{align*}
		\lim_{n \to \infty}\frac{1}{n} \sum_{k=1}^{n}z_{k}&=	\lim_{n \to \infty}\frac{1}{n} \sum_{k=1}^{N_{1}}z_{k}+	\lim_{n \to \infty}\frac{1}{n} \sum_{k=N_{1}+1}^{n}z_{k}=0.\tag{4.7}
	\end{align*}
	Moreover, since 
	\begin{align*}
		\lim_{k \to \infty}\mathbb{E}\left (\frac{\theta_{k}}{k}  \right )^{2}=\lim_{k \to \infty}\mathbb{E}\left (  \frac{\theta_{0}}{k}   \right )\cdot \mathbb{E}\left (  \frac{1}{k}\int_{0}^{k}  \omega\left ( I_{0}+\sigma B_{s}^{1} \right )ds   \right )=0,
	\end{align*}
	we obtain 
	\begin{align*}
		\lim_{n \to \infty}\frac{1}{n}\sum_{k=1}^{n}\mathbb{E}X_{k}^{2}&=\lim_{n \to \infty}\frac{1}{n}\sum_{k=1}^{n}\mathbb{E}\left (  \frac{\theta_{0}}{k}+\frac{1}{k}\int_{0}^{k}  \omega\left ( I_{0}+\sigma B_{s}^{1} \right )ds   \right )^{2}=\left ( \int_{\Omega} \omega\left ( x\right )dx   \right )^{2}.
	\end{align*}
	It then follows that
	\begin{align*}
		\lim_{n \to \infty}\frac{1}{n}\sum_{k=1}^{h}m_{k}\sigma_{k}^{2}&=\lim_{n \to \infty}\frac{\zeta^{2}O\left ( \ln n \right ) }{n}+\left ( \int_{\Omega} \omega\left ( x\right )dx   \right )^{2}=\left ( \int_{\Omega} \omega\left ( x\right )dx   \right )^{2},
	\end{align*}
	which corresponds to (iv) of Theorem 3.1. Thus, we conclude our proof.
\end{proof}
\subsubsection{Non-Gaussian noise perturbations}
In this paragraph, we'll delve into the system (4.4) with L$\mathrm{\acute{e}vy}$ noise, which is appropriate for the study of non-Gaussian perturbations with jumps.

Consider the integrable system
\begin{align*}
	I_{s}=I_{0}+\sigma  L_{s}^{1} ,\ \ \ \ 
	\theta_{s} =\theta_{0}+ \int_{0}^{s}\omega\left ( I_{r} \right )  dr+\zeta L^{2}_{s}, \ \ s \ge 0, \tag{4.8}
\end{align*}
where $L^{1}$ and $L^{2}$ are two independent one-dimensional $\mathrm{L\acute{e}vy}$ noise corresponding to $\left ( \gamma_{1},\xi_{1},N_{1},\nu_{1} \right ) $ and $\left (  \gamma_{2},\xi_{2},N_{2},\nu_{2}\right ) $,   $\sigma$ and $\zeta$ are two constants. Specifically, (4.8) can be written in the following form,
\begin{align*}
	\left\{
	\begin{aligned}
		%\nonumber
		I_{s}&=I_{0}+\sigma \gamma_{1} s+\sigma \xi_{1}\color{black}B_{s}^{1}+\sigma\int_{\left |z  \right |<1 } z\tilde{N}_{1}\left ( s,dz \right ) +\sigma\int_{\left |z  \right |\ge 1 } zN_{1}\left ( s,dz \right ) ,\\
		\theta_{s} &= \theta_{0}+\int_{0}^{s}\omega\left ( I_{r} \right ) dr+\zeta\gamma_{2} s +\zeta\xi_{2} B_{s}^{2}+\zeta\int_{\left |z  \right |<1 }z\tilde{N}_{2}\left ( s,dz \right )+\zeta\int_{\left |z  \right |\ge 1 }zN_{2}\left ( s,dz \right )  ,
	\end{aligned}
	\right.
\end{align*}
where $\int\min\left \{\left |z  \right |^{2} ,1  \right \} \nu_{i}\left ( dz \right )  <\infty$ for $i=1,2$, the assumptions related to Brownian motions and their corresponding probability spaces remain identical to those outlined in Theorem 4.1. Referring to \cite{01}, we employ the interlacing technique, which allows us to disregard jumps of $\theta$ that exceed a size of 1 and instead focus on the subsequent equation for $\theta_{s}$,
\begin{align*}
	\theta_{s} =\theta_{0}+\int_{0}^{s}\omega\left ( I_{r} \right ) dr+\zeta\gamma_{2} s +\zeta \xi_{2}B_{s}^{2}+\zeta\int_{\left |z  \right |<1 }z\tilde{N}_{2}\left ( s,dz \right ).
\end{align*}

\setcounter{theorem}{0} 
\renewcommand{\thetheorem}{4.2}
\begin{theorem}
	There exist $\varepsilon,\delta>0$ and $A_{k},\mu \in \mathbb{R}$
	such that $\left ( \sum_{k=1}^{n}\theta_{k\delta}/k\delta -\sum_{k=1}^{h }m_{k\delta}A_{k\delta}\right ) /\sqrt{n\delta}    $ converges to a Gaussian distribution $\mathcal{N}\left (0,\mu^{2} \right ) $, where $h=h\left ( n \right )$ is a positive integer, $\sum_{k=1}^{h}m_{k\delta}=n\delta$, $\mu^{2}$ is bounded below by $\left ( \int_{\Omega}\omega\left ( x \right ) dx \right ) ^{2}$ and bounded above by $2\zeta^{2}\gamma_{2}^{2}+\left ( \int_{\Omega}\omega\left ( x \right ) dx \right ) ^{2}$.
\end{theorem}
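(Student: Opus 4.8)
The plan is to transplant the proof of Theorem 4.1 to the jump setting, the only genuinely new inputs being the $L\acute{e}vy$-It\^o decomposition (Theorem 2.2) and the $L\acute{e}vy$--Khintchine form of the characteristic function. As there, I would set $\delta=1$, write $T=n$, and cut $[0,T]$ into $n$ unit blocks. After the interlacing reduction already performed in the text (which discards the large jumps of $\theta$), the process to analyse is
\begin{align*}
	\theta_{k}=\theta_{0}+\int_{0}^{k}\omega(I_{r})\,dr+\zeta\gamma_{2}k+\zeta\xi_{2}B_{k}^{2}+\zeta\int_{|z|<1}z\,\tilde{N}_{2}(k,dz).
\end{align*}
I then decompose $\theta_{k}/k=X_{k}+P_{k}$, where $X_{k}=k^{-1}(\theta_{0}+\int_{0}^{k}\omega(I_{r})\,dr)+\zeta\gamma_{2}$ collects the drift together with the $L^{1}$-driven ergodic part, and $P_{k}=k^{-1}(\zeta\xi_{2}B_{k}^{2}+\zeta\int_{|z|<1}z\,\tilde{N}_{2}(k,dz))$ collects the centred $L^{2}$-martingale parts. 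Using the independence built into the construction, the characteristic function of $D_{n}=\sum_{k=1}^{n}\theta_{k}/k$ factorises exactly as in (4.5), and grouping the indices $k$ by the shape of their characteristic functions recasts $\mathbb{E}(\mathrm{exp}\{itD_{n}\})$ into the product form $\prod_{k=1}^{h}c(t)\lambda_{k}(t)^{m_{k}}$ with $c\equiv 1$ required by Theorem 3.1.

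The one new computation is the small-$t$ expansion of $\lambda_{k}(t)=\mathbb{E}(\mathrm{exp}\{itZ_{k}\})$. By the $L\acute{e}vy$--Khintchine formula the characteristic exponent of the compensated small-jump integral is $\int_{|z|<1}(e^{i\zeta tz}-1-i\zeta tz)\,\nu_{2}(dz)$, and Taylor expanding the integrand at $t=0$ gives $-\tfrac{1}{2}\zeta^{2}t^{2}\int_{|z|<1}z^{2}\,\nu_{2}(dz)+o(t^{2})$; here the integrability hypothesis $\int\min\{|z|^{2},1\}\,\nu_{2}(dz)<\infty$ is precisely what makes the quadratic coefficient finite and the remainder $o(t^{2})$. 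Adding the Gaussian contribution $-\tfrac{1}{2}\zeta^{2}\xi_{2}^{2}t^{2}$ and the linear drift term, I obtain
\begin{align*}
	\lambda_{k}(t)=\mathrm{exp}\left\{itA_{k}-\tfrac{1}{2}\sigma_{k}^{2}t^{2}+o(t^{2})\right\},
\end{align*}
with $A_{k}=\mathbb{E}Z_{k}$ and $\sigma_{k}^{2}=\mathbb{E}Z_{k}^{2}$, which verifies (i); conditions (ii) and (iii) are immediate since $c\equiv 1$ and $d_{n}\equiv 0$, exactly as in Theorem 4.1.

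It remains to control $n^{-1}\sum_{k=1}^{h}m_{k}\sigma_{k}^{2}=n^{-1}\sum_{k=1}^{n}\mathbb{E}Z_{k}^{2}$. By the independence of $X_{k}$ and $P_{k}$ and $\mathbb{E}P_{k}=0$ this splits as $n^{-1}\sum_{k}\mathbb{E}X_{k}^{2}+n^{-1}\sum_{k}\mathbb{E}P_{k}^{2}$, where the martingale piece equals $n^{-1}\sum_{k}k^{-1}\zeta^{2}(\xi_{2}^{2}+\int_{|z|<1}z^{2}\,\nu_{2}(dz))=O(\ln n)/n\to 0$, just as before. For the first piece I would invoke the Birkhoff ergodic theorem (4.6) for the $\omega$-term: its square contributes the fixed amount $(\int_{\Omega}\omega(x)\,dx)^{2}$, which is the origin of the lower bound and must survive since it is already present in the noise-free ergodic average. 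The drift $\zeta\gamma_{2}$ sitting inside $X_{k}$ then adds $\zeta^{2}\gamma_{2}^{2}$ together with a cross term against the ergodic part; since along the jump path $I_{r}=I_{0}+\sigma L_{r}^{1}$ this cross term cannot be pinned to a single value as cleanly as in the Brownian case, I would bound rather than evaluate it, and an elementary estimate of the mixed square yields the excess $2\zeta^{2}\gamma_{2}^{2}$, hence the upper bound. If the resulting Ces\`aro average converges, hypothesis (iv) of Theorem 3.1 delivers the full-sequence limit $\mathcal{N}(0,\mu^{2})$ with $\mu^{2}$ in the stated interval; if only boundedness is available, one passes through hypothesis (v), extracting a subsequence by the Bolzano--Weierstrass theorem, with the subsequential variance obeying the same two-sided bound.

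\textbf{Main obstacle.} The crux is the ergodic step along a jump-carrying path: justifying the analogue of (4.6) for $I_{r}=I_{0}+\sigma L_{r}^{1}$ and, when an exact limit is unavailable, producing the sharp sandwich $(\int_{\Omega}\omega)^{2}\le\mu^{2}\le 2\zeta^{2}\gamma_{2}^{2}+(\int_{\Omega}\omega)^{2}$ for the second-moment average so that Theorem 3.1 can be applied. The characteristic-function side, by contrast, is routine once the $L\acute{e}vy$--Khintchine formula and the small-jump integrability are in hand.
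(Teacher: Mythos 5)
Your skeleton coincides with the paper's: take $\delta=1$, use the interlacing reduction, factor the characteristic function of $D_{n}=\sum_{k=1}^{n}\theta_{k}/k$ and group indices so that Theorem 3.1 applies with $c\equiv 1$ and $d_{n}\equiv 0$, verify (i) by a small-$t$ expansion (your L\'evy--Khintchine computation is actually more explicit than the paper's, which merely asserts the expansion with $A_{k}=\mathbb{E}Z_{k}$, $\sigma_{k}^{2}=\mathbb{E}Z_{k}^{2}$), kill the centred-martingale second moments by the $O(\ln n)/n$ estimate, and feed the Birkhoff limit $(\int_{\Omega}\omega(x)\,dx)^{2}$ into hypothesis (iv)/(v). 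The divergence --- and the genuine gap --- is in where you put the drift $\zeta\gamma_{2}$ and how you then estimate the variance.

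Because you place $\zeta\gamma_{2}$ inside $X_{k}$, you have $X_{k}=a_{k}+\zeta\gamma_{2}$ with $a_{k}=k^{-1}(\theta_{0}+\int_{0}^{k}\omega(I_{r})\,dr)$, hence $\mathbb{E}X_{k}^{2}=\mathbb{E}a_{k}^{2}+2\zeta\gamma_{2}\mathbb{E}a_{k}+\zeta^{2}\gamma_{2}^{2}$, and under your own ergodic step the Ces\`aro average of $\mathbb{E}X_{k}^{2}$ converges to $(\int_{\Omega}\omega(x)\,dx+\zeta\gamma_{2})^{2}$. Your claim that ``an elementary estimate of the mixed square yields the excess $2\zeta^{2}\gamma_{2}^{2}$'' is false: if $\int_{\Omega}\omega(x)\,dx=\zeta\gamma_{2}=1$ the limit is $4$, exceeding the asserted upper bound $2\zeta^{2}\gamma_{2}^{2}+(\int_{\Omega}\omega(x)\,dx)^{2}=3$; if $\zeta\gamma_{2}=-\int_{\Omega}\omega(x)\,dx\neq 0$ the limit is $0$, violating the asserted lower bound $(\int_{\Omega}\omega(x)\,dx)^{2}$. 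The best elementary inequality, $(a+b)^{2}\le 2a^{2}+2b^{2}$, gives $2(\int_{\Omega}\omega(x)\,dx)^{2}+2\zeta^{2}\gamma_{2}^{2}$, which is not the stated bound. So along your route neither side of the sandwich can be established and (iv)/(v) of Theorem 3.1 cannot be verified with a $\mu^{2}$ in the stated interval; your fallback through (v) would in any case only give subsequential convergence, weaker than the full-sequence claim in the statement. The paper takes the opposite split --- Brownian term kept in $X_{k}$, drift placed with the compensated jump integral in $P_{k}=\zeta\gamma_{2}+\frac{\zeta}{k}\int_{|z|<1}z\tilde{N}_{2}(k,dz)$ --- so that the cross term inside $\mathbb{E}P_{k}^{2}$ genuinely vanishes (the $\tilde{N}_{2}$-integral is centred), and then $0\le\mathbb{E}P_{k}^{2}\le 2\zeta^{2}\gamma_{2}^{2}+\frac{2\zeta^{2}}{k}\int_{|z|<1}|z|^{2}\nu(dz)$ produces exactly the two stated bounds, while the remaining cross term $2\mathbb{E}(X_{k}P_{k})=2\mathbb{E}X_{k}\,\mathbb{E}P_{k}\to 2\zeta\gamma_{2}\int_{\Omega}\omega(x)\,dx$ is silently discarded. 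Your decomposition makes precisely that discarded term visible, which is why your bounds cannot close; to recover the theorem as stated you must reproduce the paper's split and its treatment of the cross term, not bound the mixed square.
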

\begin{proof}
	Firstly, we denote $T$ as $n\delta$ for a positive constant $\delta$ (for simplicity we suppose $\delta=1$).
	Next, we define  $D_{n}=\sum_{k=1}^{n}\theta_{k}/k=\sum_{k=1}^{n}\left ( X_{k}+P_{k} \right )=H_{n}+V_{n}$,
	where
	\begin{align*}
		&X_{k}=\frac{1}{k}\left ( \theta_{0}+\int_{0}^{k}\omega\left ( I_{r} \right ) dr +\zeta\xi_{2}B_{k}^{2}\right ),\ \  H_{n}=\sum_{k=1}^{n}X_{k}, \\ 
		&P_{k}=\frac{\zeta}{k} \left (k\gamma_{2} +\int_{\left |z  \right |<1 }z \tilde{N}_{2}\left ( k,dz \right )   \right ), \ \  V_{n}=\sum_{k=1}^{n}P_{k}.
	\end{align*} 
In accordance with the previous demonstrations, the characteristic function of $D_{n}$ can be expressed as 
\begin{align*}
	\mathbb{E}\left (  \mathrm{exp}\left \{i tD_{n}  \right \} \right )&= \prod_{k=1}^{n}\mathbb{E}\left ( \mathrm{exp}\left \{it\left ( X_{k}+P_{k} \right )   \right \}\right ) \triangleq \prod_{k=1}^{h}c\left ( t \right ) \lambda_{k}\left ( t \right ) ^{m_{k}}+d_{n}\left ( t \right ) ,
\end{align*}
where $c\left ( t \right ) =1$, $d_{n}\left (  t\right )=0 $, $h=h\left (n  \right ) $ is a positive integer, the sequence $\left \{ m_{k} \right \}_{m=1,\cdots,h} $ satisfies $\sum_{k=1}^{h}m_{k}=n$, $\left \{ \lambda_{k} \right \}_{k=1,\cdots,h}$ and $\left \{ Z_{k} \right \}_{k=1,\cdots,h}$ are the shorthands, which describe all the existing different characteristic functions and the corresponding random variables of sequence $\left \{ X_{j}+P_{j} \right \}_{j=1,\cdots,n}$. To be specific, for $\varepsilon>0$, $\lambda_{k}\left ( t \right ) $ is defined as
\begin{align*}
	\lambda_{k}\left ( t \right ) =\mathbb{E}\left ( \mathrm{exp}\left \{ itZ_{k} \right \}  \right )
	=\mathrm{exp}\left \{ itA_{k} -\frac{t^{2}\sigma_{k}^{2}}{2} +o\left (t^{2} \right )  \right \} ,\ \ t \in \left [-\varepsilon,\varepsilon  \right ] ,
\end{align*}
where $A_{k}=\mathbb{E}Z_{k}$ and $\sigma_{k}^{2}=\mathbb{E}Z_{k}^{2}$. Therefore, we've successfully constructed (i),  (ii) and (iii) of Theorem 3.1. Now we turn to prove either (iv) or (v) of Theorem 3.1.  Equivalent to Theorem 4.1, $\lim_{n \to \infty}\sum_{k=1}^{n}\mathbb{E}X_{k}^{2}/n$ is well defined  and can be expressed as $\left ( \int_{\Omega}\omega\left ( x \right ) dx \right ) ^{2}$. Meanwhile, 
\begin{align*}
	\mathbb{E}P_{k}^{2}\color{black}&=\mathbb{E}\left [    \zeta\gamma_{2}+\frac{\zeta}{k}\int_{\left |z_{i}  \right |<1 }z\tilde{N}_{2}\left ( k,dz \right )  \right ]^{2}
	\le 2\zeta^{2}\gamma_{2}^{2}+ \frac{2\zeta^{2}}{k}\int_{\left |z  \right |<1 }\left |z  \right |^{2}\nu\left ( dz \right )  ,
\end{align*}
where we use a property of $\mathrm{L\acute{e}vy}$ processes,
\begin{align*}
	\mathbb{E}\left (  \int_{\left |z  \right |<1 }z\tilde{N}_{2}\left ( k,dz \right )  \right )^{2}=\int_{\left |z \right |<1}k\left |z \right |^{2}\nu\left ( dz \right ).
\end{align*}
Therefore, taking $n$ to infinity, we have
\begin{align*}
	\lim_{n \to \infty}\frac{1}{n}\sum_{k=1}^{n}\mathbb{E}P_{k}^{2}&\le 	2\zeta^{2}\gamma_{2}^{2}+\lim_{n \to \infty}\frac{2\mathcal{O}\left (\ln n  \right ) }{n}\cdot\int_{\left |z  \right |<1 }\left |z  \right |^{2}\nu\left ( dz \right )=2\zeta^{2}\gamma^{2}_{2} .
\end{align*}
To this end, 
\begin{align*}
	\lim_{n \to \infty}\frac{1}{n}\sum_{k=1}^{h}m_{k}\sigma_{k}^{2}&=\lim_{n \to \infty}\frac{1}{n}\sum_{k=1}^{n}\mathbb{E} X_{k}^{2} + \lim_{n \to \infty}\frac{1}{n}\sum_{k=1}^{n}\mathbb{E} P_{k}^{2}\le 2\zeta^{2}\gamma_{2}^{2}+\left ( \int_{\Omega}\omega\left ( x \right ) dx \right ) ^{2},
\end{align*}
while by Theorem 4.1, 
\begin{align*}
	\lim_{n \to \infty}\frac{1}{n}\sum_{k=1}^{h}m_{k}\sigma_{k}^{2} \ge \left ( \int_{\Omega}\omega\left ( x \right ) dx \right ) ^{2}.
\end{align*}
This precisely aligns with (iv) of Theorem 3.1, thereby bringing our proof to the conclusion.
\end{proof}

Next, we consider a simple pendulum system. Suppose $l$ and $\theta$ are the length and angle of the single pendulum respectively, then the Lagrangian of such system can be denote as
\begin{align*}
L=T-V=\frac{ml^{2}\dot{\theta}^{2}}{2}-mgl\left ( 1-\cos\theta \right ) ,
\end{align*}
where $T=ml^{2}\dot{\theta} ^{2}/2$ is the kinetic energy and $V=mgl\left ( 1-\cos \theta\right )$ is the potential energy, $m$ is the mass of the pendulum and $g$ is the acceleration of gravity. 
Applying the Euler-Lagrange equation, we obtain
\begin{align*}
\frac{d}{dt}\left ( \frac{\partial L}{\partial \dot{\theta}} \right ) -\frac{\partial L}{\partial \theta}=ml^{2}\ddot{\theta}-mgl\sin\theta=0,\tag{4.9}
\end{align*}
which indicates that the pendulum equation can be simplified as $\ddot{\theta}+\sin\theta=0$. If we denote $x=\dot{\theta}$, then (4.9) equivalently turns to
\begin{align*}
\dot{\theta}=x,\ \ \dot{x}=-\sin\theta.
\end{align*}
By calculating $\dot{x}=\dot{\theta}=0$, we obtain the set $M$ of equilibrium points, that is,
\begin{align*}
M=\left \{ \theta:\theta=n\pi,\ n \in \mathbb{Z}\right \}.
\end{align*}
To be specific, on the one hand, if $n=2k\pi, k \in \mathbb{Z}$, that is, $\left ( \theta,x \right )=\left ( 2k\pi,0 \right )  $. Through linearization, we obtain that the eigenvalues of the coefficient matrix are all pure imaginary numbers, so that $\theta=2k\pi,k \in \mathbb{Z}$ are the centered equilibrium points of the linearized system, that is, the original single pendulum will acts as a periodic and closed orbit near $\theta=2k\pi,k \in \mathbb{Z}$. On the other hand, for $n=\left (2k+1  \right )\pi ,k \in \mathbb{Z}$, that is, $\left ( \theta,x \right )=\left ( \left ( 2k+1 \right ) \pi,0 \right )  $, in the same way, it can be proved that the eigenvalues of the corresponding coefficient matrix have positive or negative real parts, thereby indicating that such points do not constitute the stable equilibrium points that we seek. Consequently, we shall pay our attention on the dynamics of the pendulum 
when $\theta$ approaches to $2k\pi,k\in \mathbb
Z$.

If we introduce the momentum as $p_{\theta}=\partial L/\partial \dot{\theta}=ml^{2}\dot{\theta}^{2}$,
then according to the Legendre transformation, the corresponding Hamiltonian $H\left (p_{\theta},\theta \right ) $ can be defined as follows
\begin{align*}
H=T+V \approx\frac{p_{\theta}^{2}}{2ml^{2}}+\frac{mgl\theta^{2}}{2},\ \ \mathrm{where}\ \left |\theta-2k\pi  \right |\ll 1, \ \ k \in \mathbb{Z},
\end{align*}
together with the Hamiltonian equation
\begin{align*}
\dot{\theta} = \frac{\partial H}{\partial p_{\theta}} = \frac{p_{\theta}}{ml^2} ,\ \ \ \ 
\dot{p}_{\theta} = -\frac{\partial H}{\partial \theta} = -mgl\theta .
\end{align*}
%Here we suppose that $H$ is independent of time $t$.
Furthermore, since it can be deduced that $dH/dt=\left ( \partial H/\partial \theta \right )\dot{\theta}+ \left ( \partial H/\partial p_{\theta} \right )\dot{p_{\theta}}=0$, when $\theta$ is close enough to $2k\pi,k \in \mathbb{Z}$, the total energy $E$ of such system is conserved. Meanwhile, we can see that $V$ is a quadratic function of $\theta$ under the above circumstance, so that the motion of the pendulum is periodic and the period $T$ can be exactly approximated by $2\pi\sqrt{l/g}$.
To this end, it's certain that the single pendulum behaves as an integrable system for $\left |\theta-2k\pi  \right | \ll 1 ,k \in \mathbb{Z}$, presenting us with an opportunity to shift our focus towards studying the action-angle coordinates of these types of systems. Define the action variable $I$ as a constant related to $E$ (for the sake of convenience, we just assume that $I$ equals to $E$ here) and the angle variable $\eta$ as a representation describing the periodicity. To be specific,
\begin{align*}
I=E,\ \ \ \ \eta=\omega t+\phi,\tag{4.10}
\end{align*}
where the frequency $\omega=\sqrt{g/l}$ is a constant and $\phi$ is the initial phase. 

Similar to the previous model, if we add stochastic perturbations (Brownian motions or $\mathrm{L\acute{e}vy}$ processes) to (4.10) as well, then we can obtain the results analogous to Theorem 4.1 and Theorem 4.2. Since $\omega$ is a constant here, the proofs are considerably simplified and, as such, we will omit them here.

\subsection{High-dimensional Integrable Hamiltonian systems with white noise perturbations}
Now, we will discuss the stability of high-dimensional integrable Hamiltonian systems perturbed by white noise, especially the persistence of their invariant tori.  Analogous to the previous contents, we will discuss two cases of perturbations,
with specific forms including Brownian motions and $\mathrm{L\acute{e}vy}$ processes.

\subsubsection{Gaussian noise perturbations}
Consider a $d$-dimensional integrable stochastic  Hamiltonian system described in action-angle coordinates,
\begin{align*}
I_{s} &=I_{0}+\sigma  B^{1}_{s},\ \ \ \
\theta_{s}= \theta_{0}+\int_{0}^{s}\omega\left ( I_{r} \right )dr+\eta B^{2}_{s},
\end{align*}
where $B^{1}$ and $B^{2}$ are two $d$-dimensional pairwise standard independent Brownian motions defined on a filtered probability space $\left ( \Omega,\mathcal{F},\mathcal{F}_{t},\mathbb{P} \right ) $ satisfying the usual conditions, $B^{1}$ and $B^{2}$ are independent of each other, $\sigma$ and $\eta$ are two constants, $\omega=\left ( \omega_{1},\cdots,\omega_{d} \right )^{\mathrm{T}}$ denotes the frequency of a smooth \color{black} Hamiltonian function $H$ and is bounded and continuous. \color{black} We denote by $\mathbb{E}$ the corresponding expectation of $\mathbb{P}$.
In particular, $\theta_{0}$ is written as $\left ( \theta_{01},\cdots,\theta_{0d} \right )^{\mathrm{T}}$. 
\begin{remark}
In the following contents, we'll always follow the notations in the previous sections. For a given vector $z=\left ( z_{1},\cdots,z_{d} \right )^{\mathrm{T}}$, $z^{2}=zz^{\mathrm{T}}$ is a $d\times d$ matrix whose $i$-th diagonal element is expressed as $z_{i}^{2}$ for $i=1,\cdots,d$.
\end{remark}

\setcounter{theorem}{0} 
\renewcommand{\thetheorem}{4.3}
\begin{theorem}
There exist a neighbourhood $\mathcal{O}$ of $\mathbf{0}\in \mathbb{R}^{d}$, $\delta>0$ and $A_{k\delta},\mu \in \mathbb{R}^{d}$, such that for all $n \in \mathbb{N}$, 
\begin{align*}
	\frac{1}{\sqrt{n\delta}}	\left (\sum_{k=1}^{n}\frac{\theta_{k\delta}}{k\delta}-\sum_{k=1}^{h }m_{k\delta}A_{k\delta}  \right ) 
\end{align*} 
converges to a Gaussian distribution $\mathcal{N}\left (\mathbf{0},\left (\int_{\Omega}\omega\left ( x \right )dx  \right )^{2} \right ) $, where $h=h\left ( n \right ) $ is a positive integer and the sequence $\left \{ m_{k} \right \}_{k=1,\cdots,h} $ satisfies $\sum_{k=1}^{h }m_{k\delta}=n\delta$.
% and the $\left (p,q  \right ) $-th element of matrix $\mu^{2}$ is expressed as (4.15) for $p,q=1,\cdots,d$.
\end{theorem}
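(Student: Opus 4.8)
The plan is to mirror the proof of Theorem 4.1, replacing the scalar Nagaev--Guivarc'h criterion (Theorem 3.1) by its $\mathbb{R}^{d}$-valued counterpart (Theorem 3.2). First I would reduce to $\delta=1$ and write $T=n$, then decompose the vector-valued partial sum as
\begin{align*}
D_{n}=\sum_{k=1}^{n}\frac{\theta_{k}}{k}=\sum_{k=1}^{n}X_{k}+\sum_{k=1}^{n}P_{k}=H_{n}+V_{n},
\end{align*}
where $X_{k}=\frac{1}{k}\bigl(\theta_{0}+\int_{0}^{k}\omega(I_{0}+\sigma B_{s}^{1})\,ds\bigr)\in\mathbb{R}^{d}$ carries the drift and $P_{k}=\frac{\eta}{k}B_{k}^{2}\in\mathbb{R}^{d}$ carries the Gaussian fluctuation. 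Since $B^{1}$ and $B^{2}$ are independent and have independent increments, the vectors $\theta_{k}/k$ may be organized into $h=h(n)$ families as in Theorem 4.1, yielding the factorization
\begin{align*}
\mathbb{E}\bigl(\exp\{i\langle t,D_{n}\rangle\}\bigr)=\prod_{k=1}^{h}c(t)\,\lambda_{k}(t)^{m_{k}}+d_{n}(t),
\end{align*}
with $c(t)\equiv1$ and $d_{n}(t)\equiv0$, which is exactly the form demanded by Theorem 3.2.

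Next I would check hypotheses (i)--(iii). Writing $A_{k}=\mathbb{E}Z_{k}\in\mathbb{R}^{d}$ and $\sigma_{k}^{2}=\mathbb{E}(Z_{k}Z_{k}^{\mathrm{T}})$ for the second-moment matrix of the $k$-th group, the second-order expansion of the characteristic function of $Z_{k}$ gives
\begin{align*}
\lambda_{k}(t)=\exp\Bigl\{i\langle t,A_{k}\rangle-\tfrac{1}{2}\,t^{\mathrm{T}}\sigma_{k}^{2}t+o(t^{\mathrm{T}}t)\Bigr\},
\end{align*}
which is (i); conditions (ii) and (iii) are immediate since $c$ is constant and $d_{n}$ vanishes. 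The substance of the argument is then (iv): I must show that every entry of the $d\times d$ matrix $\frac{1}{n}\sum_{k=1}^{h}m_{k}\sigma_{k}^{2}=\frac{1}{n}\sum_{k=1}^{n}\bigl(\mathbb{E}(X_{k}X_{k}^{\mathrm{T}})+\mathbb{E}(P_{k}P_{k}^{\mathrm{T}})\bigr)$ converges, the cross terms $\mathbb{E}(X_{k}P_{k}^{\mathrm{T}})$ vanishing by the independence of $X_{k}$ and $P_{k}$ together with $\mathbb{E}P_{k}=\mathbf{0}$.

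The heart of the proof is this matrix-valued ergodic average. For the fluctuation part, pairwise independence of the components of $B^{2}$ gives $\mathbb{E}(P_{k}P_{k}^{\mathrm{T}})=\frac{\eta^{2}}{k}\,\mathrm{Id}$, whose Ces\`{a}ro average is of order $(\ln n)/n$ and hence tends to $\mathbf{0}$. For the drift part I would invoke the Birkhoff ergodic theorem componentwise, so that $\frac{1}{k}\int_{0}^{k}\omega_{i}(I_{0}+\sigma B_{s}^{1})\,ds\to\int_{\Omega}\omega_{i}(x)\,dx$ for each $i$; the boundedness of $\omega$ then lets dominated convergence pass the limit through the expectation of the product, so that $\mathbb{E}(X_{ki}X_{kj})\to\bigl(\int_{\Omega}\omega_{i}\,dx\bigr)\bigl(\int_{\Omega}\omega_{j}\,dx\bigr)$ while the initial-phase term $\theta_{0}/k$ is annihilated. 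Consequently the limiting matrix $\mu^{2}$ has $(i,j)$ entry $\bigl(\int_{\Omega}\omega_{i}\,dx\bigr)\bigl(\int_{\Omega}\omega_{j}\,dx\bigr)$, that is $\mu^{2}=\bigl(\int_{\Omega}\omega(x)\,dx\bigr)\bigl(\int_{\Omega}\omega(x)\,dx\bigr)^{\mathrm{T}}=\bigl(\int_{\Omega}\omega(x)\,dx\bigr)^{2}$ in the paper's convention, which is (iv); Theorem 3.2 then yields convergence to $\mathcal{N}(\mathbf{0},\mu^{2})$. The main obstacle is precisely this element-wise control of the covariance matrix: unlike the scalar setting one must dispatch the off-diagonal cross-correlations of the drift and justify interchanging the Birkhoff limit with the expectation of a product of two ergodic averages, which is where the hypothesis that $\omega$ is bounded is essential.
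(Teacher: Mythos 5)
Your proposal follows essentially the same route as the paper's own proof: the same decomposition of $\sum_{k}\theta_{k}/k$ into a drift part $X_{k}$ and a fluctuation part $P_{k}$, the same grouping of characteristic functions to verify hypotheses (i)--(iii) of Theorem 3.2 with $c\equiv 1$ and $d_{n}\equiv 0$, the same $O(\ln n/n)$ dismissal of the Brownian covariance, and the same componentwise Birkhoff argument giving the limit matrix with entries $\bigl(\int_{\Omega}\omega_{p}\,dx\bigr)\bigl(\int_{\Omega}\omega_{q}\,dx\bigr)$. Your explicit appeal to boundedness of $\omega$ and dominated convergence to pass the ergodic limit through the expectation is a small point of added care that the paper leaves implicit, but it does not change the argument.
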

\begin{proof}
Following from Theorem 4.1, we try to find the correspondence between our preconditions and Theorem 3.2. Firstly, we denote $T$ as $n\delta$ for a positive constant $\delta$ (for simplicity we suppose $\delta=1$). Next, we define  $Q_{n}=\sum_{k=1}^{n}\theta_{k\delta}/k\delta=\sum_{k=1}^{n}\left ( X_{k}+Y_{k} \right )$,
where
\begin{align*}
	&X_{k}=\frac{1}{k}\left ( \theta_{0}+\int_{0}^{k}\omega \left ( I_{r} \right )dr \right ),\ \ Y_{k}=\frac{\eta B_{k}^{2}}{k},\ \   H_{n}=\sum_{k=1}^{n}X_{k},\ \ V_{n}=\sum_{k=1}^{n}Y_{k}.
\end{align*} 
Using the Birkhoff ergodic theorem again (see (4.6)),  
%where $\Omega_{I}$ is the probability space of $I$,
we obtain the convergence of $X_{k}$. Based on the previous illustrations, the characteristic function of $Q_{n}$ can be expressed as 
\begin{align*}
	\mathbb{E}\left ( \mathrm{exp}\left \{ i\left \langle t,Q_{n} \right \rangle \right \} \right ) &=\mathbb{E}\left ( \mathrm{exp}\left \{ i\left \langle t,\sum_{k=1}^{n}\frac{\theta_{k}}{k} \right \rangle \right \} \right )=\prod_{k=1}^{h}c\left ( t \right ) \lambda_{k}\left (t  \right )^{m_{k}} ,
\end{align*}
where $t\in \mathcal{O}$, $c\left ( t \right ) =1$, $d_{n}\left (  t\right )=0 $, $h=h\left (n  \right ) $ is a positive integer, the sequence $\left \{ m_{k} \right \}_{m=1,\cdots,h} $ satisfies $\sum_{k=1}^{h}m_{k}=n$, $\left \{ \lambda_{k} \right \}_{k=1,\cdots,h}$ and $\left \{ Z_{k} \right \}_{k=1,\cdots,h}$ are denoted as different characteristic functions and random variables of sequence $\left \{ X_{k}+Y_{k} \right \}_{k=1,\cdots,n}$. To be specific, $\lambda_{k}\left ( t \right ) $ is defined as
\begin{align*}
	\lambda_{k}\left ( t \right ) =\mathbb{E}\left ( \mathrm{exp}\left \{ i\left \langle t,Z_{k} \right \rangle  \right \}  \right )
	=\mathrm{exp}\left \{ i\left \langle t,A_{k} \right \rangle-\frac{t^{\mathrm{T}}\sigma_{k}^{2}t}{2} +o\left (t^{\mathrm{T}}t \right )  \right \} ,\ \ t \in \mathcal{O},
\end{align*}
where $A_{k}=\mathbb{E}Z_{k}\in\mathbb{R}^{d}$ and  $\sigma_{k}^{2}=\mathbb{E}Z_{k}^{2}$ is a $d \times d$ matrix. Therefore, we've successfully constructed (i), (ii) and (iii) of Theorem 3.2. Now we turn to prove either (iv) or (v) of Theorem 3.2. Denote $\theta_{k}$ as $\left ( \theta_{k1},\cdots,\theta_{kd} \right )^{\mathrm{T}}$ and every $\theta_{ki}$ as $X_{ki}+Y_{ki}$ for $i=1,\cdots,d$, such that $X_{k}=\left ( X_{k1},\cdots,X_{kd} \right )^{\mathrm{T}}$ and $Y_{k}=\left (Y_{k1},\cdots,Y_{kd}  \right )^{\mathrm{T}} $.
	On the one hand,  given the independence of $B_{k}^{2}$,  it follows that
\begin{align*}
	\mathbb{E} Y_{kp}Y_{kq} =\mathbb{E}Y_{kp}\cdot\mathbb{E}Y_{kq}=0\ \ \mathrm{for}\ \ \ p,q =1,\cdots,d\ \mathrm{and}\ p\ne q,
\end{align*}
which indicates that $\mathbb{E}Y_{k}^{2}$ is a diagonal matrix. According to Theorem 4.1 and (4.7),
\begin{align*}
	\lim_{n \to \infty}\frac{1}{n}\sum_{k=1}^{n}\mathbb{E}Y_{ki}^{2}=0.\tag{4.11}
\end{align*}
On the other hand, 
as for matrix $\mathbb{E}X_{k}^{2}$, we'll study its diagonal and non-diagonal elements separately. Specifically, for the diagonal elements, following from Theorem 4.1, we obtain 
\begin{align*}
	\lim_{k \to \infty}\mathbb{E}X_{ki}^{2}&=\lim_{k \to \infty}\mathbb{E}\left (  \frac{\theta_{0i}}{k}+\frac{1}{k}\int_{0}^{k}  \omega_{i}\left ( I_{r} \right )dr   \right )^{2}=\left (\int_{\Omega} \omega_{i}\left ( x\right )dx   \right )^{2},\ i=1,\cdots,d,
\end{align*}
so that for $i=1,\cdots,d $,
\begin{align*}
	\lim_{n\to\infty}\frac{1}{n}\sum_{k=1}^{n}\mathbb{E}X_{ki}^{2}=\left (\int_{\Omega} \omega_{i}\left ( x\right )dx   \right )^{2}.\tag{4.12}			
\end{align*}
 Now we move on to the non-diagonal elements. Since for different $p,q=1,\cdots,d$, 
\begin{align*}
	\lim_{k\to \infty}\mathbb{E}\left (  \frac{\theta_{0p}}{k} \right ) =\lim_{k \to \infty}\mathbb{E}\left (  \frac{\theta_{0p}}{k} \right ) \cdot \left ( \frac{1}{k}\int_{0}^{k}  \omega_{q}\left ( I_{s} \right )ds   \right )=0,
\end{align*}
then it can be deduced that
\begin{align*}
	\lim_{k \to \infty}\mathbb{E}X_{kp}X_{kq}&=\lim_{k \to \infty}\mathbb{E}\left [   \frac{1}{k}\left (\theta_{0p} +\int_{0}^{k}\omega_{p}\left ( I_{r} \right ) dr \right )+\frac{1}{k}\left (\theta_{0q} +\int_{0}^{k}\omega_{q}\left ( I_{r} \right ) dr \right )\right ]\\
	&=\left ( \int_{\Omega} \omega_{p}\left ( x\right )dx  \right ) \cdot \left ( \int_{\Omega} \omega_{q}\left ( x\right )dx   \right ),
\end{align*}
that is,
\begin{align*}
	\lim_{n \to \infty}\frac{1}{n}\sum_{k=1}^{n}\mathbb{E}X_{kp}X_{kq}=\left ( \int_{\Omega} \omega_{p}\left ( x\right )dx  \right ) \cdot \left (\int_{\Omega} \omega_{q}\left ( x\right )dx   \right ).\tag{4.13}
\end{align*}
As we have already claimed, $Z_{k}$ has the form $\theta_{k}/k=X_{k}+Y_{k}$ and $X_{k}$ is exactly independent of $Y_{k}$, hence,  
we obtain from (4.11), (4.12) and (4.13) that for $p,q=1,\cdots,d$, the matrix $\mu^{2}\triangleq\lim_{n \to \infty}\sum_{k=1}^{h}m_{k}\sigma_{k}^{2}/n$ is well-defined with the elements
\begin{align*}
	\mu_{pq}^{2}&=\left ( \int_{\Omega} \omega_{p}\left ( x\right )dx  \right ) \cdot \left (  \int_{\Omega} \omega_{q}\left ( x\right )dx   \right ),\ \ p,q=1,\cdots,d.\tag{4.14}
\end{align*}
It can be seen that $\mu^{2}$ is precisely equal to $\left (\int_{\Omega}\omega\left ( x \right )dx  \right )^{2}$ which aligns with (iv) of Theorem 3.2. We're now prepared to apply this theorem and reach our result. 
\end{proof}

\begin{remark} The aforementioned theorem demonstrates that, compared to the original systems without perturbations, the action variables $I_{T}$ of the integrable stochastic Hamiltonian systems strictly obey $N\left ( \mu_{1},\sigma_{1}^{2} \right ) $, while the corresponding frequencies obey $N\left ( \mu_{2},\sigma_{2}^{2} \right ) $ asymptotically. Here, $\mu_{i}$ and $\sigma_{i}$, $i=1,2$ depend on the specific systems being studied. We thus have a precise description of the persistence of invariant tori in integrable Hamiltonian systems under white noise perturbations within the framework of the central limit theorems.
\end{remark}

\subsubsection{Non-Gaussian noise perturbations}
Consider a $d$-dimensional integrable stochastic  Hamiltonian system described in action-angle coordinates
\begin{align*}
I_{s}=I_{0}+\sigma  L^{1}_{s}, \ \ \ \
\theta_{s}= \theta_{0}+\int_{0}^{s}\omega\left ( I_{r} \right )dr+\eta L^{2}_{s}, \tag{4.15}
\end{align*}
where 
$L^{1}$ and $L^{2}$ are two independent $\mathrm{L\acute{e}vy}$ noise corresponding to $\left ( \gamma_{1},\xi_{1},N_{1},\nu_{1} \right ) $ and $\left (  \gamma_{2},\xi_{2},N_{2},\nu_{2}\right ) $, $\sigma$ and $\eta$ are two constants, $\omega=\left ( \omega_{1},\cdots,\omega_{d} \right )^{\mathrm{T}}$ denotes the frequency of a smooth \color{black} Hamiltonian function $H$ and is bounded and continuous. Then for $s \ge 0$, (4.15) can be equivalently rewritten in the following form
\begin{align*}
\left\{
\begin{aligned}
	%\nonumber
	I_{s} &=I_{0}+\sigma \gamma_{1} s+\sigma \xi_{1}\color{black}B^{1}_{s}+\sigma\int_{\left |z  \right |<1 } z\tilde{N}_{1}\left ( s,dz \right ) ,\\
	\theta_{s}&= \theta_{0}+\int_{0}^{s}\omega\left ( I_{r}  \right )dr+\eta\gamma_{2} s+\eta\xi_{2} B^{2}_{s}+\eta\int_{\left |z  \right |<1 }z\tilde{N}_{2}\left ( s,dz \right ),
\end{aligned}
\right.
\end{align*}
where $\int\min\left \{\left |z  \right |^{2} ,1  \right \} \nu_{i}\left ( dz \right )  <\infty$ for $i=1,2$ and $z=\left ( z_{1},\cdots,z_{d} \right )^{\mathrm{T}} $. Moreover, the assumptions for Brownian motions and their probability spaces remain identical to those outlined in Theorem 4.3.

\setcounter{theorem}{0} 
\renewcommand{\thetheorem}{4.4}
\begin{theorem}
There exist a neighbourhood $\mathcal{O}$ of $\mathbf{0}\in \mathbb{R}^{d}$, constant $\delta>0$, $W_{k},\mu\in\mathbb{R}^{d}$, such that $\left (\sum_{k=1}^{n}\theta_{k\delta}/k\delta-\sum_{k=1}^{h }m_{k\delta}A_{k\delta}  \right ) /\sqrt{n\delta}$ converges to a Gaussian distribution $\mathcal{N}\left (\mathbf{0},\zeta^{2} \right ) $ for all $n \in \mathbb{N}$, where $h=h\left ( n \right ) $ is a positive integer,  $\sum_{k=1}^{h }m_{k\delta}=n\delta$ and
the $\left ( p,q \right ) $-th element of matrix $\zeta^{2}$ has a lower bound (4.14) and an upper bound defined as the sum of (4.14) and (4.17) for $p,q=1,\cdots,d$.
\end{theorem}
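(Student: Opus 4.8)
The plan is to fuse the two strategies already deployed in the paper: the $d$–dimensional bookkeeping of Theorem 4.3 and the Lévy–noise decomposition of Theorem 4.2, so that the whole statement reduces to a single application of Theorem 3.2. First I would set $T=n\delta$ and take $\delta=1$ for clarity, then invoke the interlacing technique quoted before the theorem to discard the jumps of $\theta$ larger than one, keeping only the Brownian part, the Lévy drift, and the compensated small jumps. Writing $Q_{n}=\sum_{k=1}^{n}\theta_{k}/k=\sum_{k=1}^{n}(X_{k}+P_{k})=H_{n}+V_{n}$ with the $\mathbb{R}^{d}$–valued pieces
\begin{align*}
X_{k}=\frac{1}{k}\left(\theta_{0}+\int_{0}^{k}\omega(I_{r})\,dr+\eta\xi_{2}B_{k}^{2}\right),\qquad P_{k}=\frac{\eta}{k}\left(k\gamma_{2}+\int_{|z|<1}z\,\tilde{N}_{2}(k,dz)\right),
\end{align*}
the mutual independence of $L^{1}$, $L^{2}$ and of the Brownian and Poisson components lets me factor the characteristic function exactly as $\mathbb{E}\exp\{i\langle t,Q_{n}\rangle\}=\prod_{k=1}^{h}c(t)\lambda_{k}(t)^{m_{k}}+d_{n}(t)$ with $c\equiv1$ and $d_{n}\equiv0$, where the $Z_{k}$ represent the $h$ distinct characteristic functions occurring among $\{X_{j}+P_{j}\}$. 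A second–order expansion of $\lambda_{k}(t)=\mathbb{E}\exp\{i\langle t,Z_{k}\rangle\}$ then supplies hypotheses (i)--(iii) of Theorem 3.2 with $A_{k}=\mathbb{E}Z_{k}$ and $\sigma_{k}^{2}=\mathbb{E}Z_{k}^{2}$, precisely as in Theorems 4.2 and 4.3.

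The core of the argument is the evaluation of the limiting matrix $\lim_{n\to\infty}\frac{1}{n}\sum_{k=1}^{h}m_{k}\sigma_{k}^{2}=\lim_{n\to\infty}\frac{1}{n}\sum_{k=1}^{n}\mathbb{E}Z_{k}^{2}$, which I would split, using the independence of $X_{k}$ and $P_{k}$, into an $X$–contribution and a $P$–contribution. The $X$–contribution is handled exactly as in Theorem 4.3: the Birkhoff ergodic theorem (4.6), together with the vanishing of the $\theta_{0}/k$ terms, shows that its $(p,q)$–entry converges to $(\int_{\Omega}\omega_{p}(x)\,dx)(\int_{\Omega}\omega_{q}(x)\,dx)$, i.e. to the matrix (4.14), which furnishes the announced lower bound.

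For the $P$–contribution I would compute $\mathbb{E}(P_{k})_{p}(P_{k})_{q}$ via the Lévy isometry for the compensated Poisson integral, namely $\mathbb{E}[\int_{|z|<1}z_{p}\tilde{N}_{2}(k,dz)\int_{|z|<1}z_{q}\tilde{N}_{2}(k,dz)]=k\int_{|z|<1}z_{p}z_{q}\,\nu_{2}(dz)$, so that each entry splits into the deterministic drift term $\eta^{2}\gamma_{2p}\gamma_{2q}$ and a small–jump term of order $1/k$. Because $\frac{1}{n}\sum_{k=1}^{n}1/k=O(\ln n)/n\to0$, the small–jump term vanishes in the Cesàro average, leaving only the drift matrix $\eta^{2}\gamma_{2}\gamma_{2}^{\mathrm{T}}$; I would record its contribution as the quantity (4.17), bounded above exactly as in Theorem 4.2 through the elementary inequality $(a+b)^{2}\le 2a^{2}+2b^{2}$. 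Summing the two contributions shows that every entry of $\frac{1}{n}\sum_{k=1}^{h}m_{k}\sigma_{k}^{2}$ possesses a limit, so that hypothesis (iv) of Theorem 3.2 applies and delivers convergence to $\mathcal{N}(\mathbf{0},\zeta^{2})$ with $\zeta^{2}$ sandwiched between (4.14) and (4.14)$+$(4.17); should one prefer to retain only the bounds, hypothesis (v) yields the same Gaussian limit along a subsequence.

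I expect the main obstacle to be the off–diagonal structure of the $d\times d$ covariance matrix: unlike the diagonal entries, the cross entries involve the joint moments $\mathbb{E}(X_{k})_{p}(X_{k})_{q}$ and the cross–component integral $\int z_{p}z_{q}\,\nu_{2}(dz)$, and one must verify that the Birkhoff averaging and the interlacing step remain compatible with these mixed components, and that the resulting $\zeta^{2}$ is a genuine covariance (symmetric, positive semidefinite) lying entrywise between the two bound matrices. The remaining steps — the characteristic–function factorization, the Taylor expansion, and the harmonic–sum estimate $O(\ln n)/n\to0$ — are routine adaptations of Theorems 4.2 and 4.3.
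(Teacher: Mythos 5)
Your proposal follows the same route as the paper's proof: the identical splitting $Q_{n}=\sum_{k}(X_{k}+P_{k})=H_{n}+V_{n}$ after interlacing away the large jumps, the same characteristic-function grouping with $c\equiv 1$, $d_{n}\equiv 0$ to secure hypotheses (i)--(iii) of Theorem 3.2, and the same reuse of Theorem 4.3 (Birkhoff averaging) for the $X$-contribution. The one genuine difference lies in how you treat the drift/small-jump contribution. The paper never computes its limit: it only derives entrywise \emph{inequalities}, $\lim_{n}\frac{1}{n}\sum_{k}\mathbb{E}Y_{kp}Y_{kq}\le\frac{\eta^{2}}{2}(\gamma_{2p}+\gamma_{2q})^{2}$, and assembles the bound matrix (4.17); strictly speaking this verifies only hypothesis (v) of Theorem 3.2, which yields convergence along a subsequence, even though the theorem asserts convergence of the full sequence --- a looseness the paper glosses over when it "applies Theorem 3.2." You instead evaluate the contribution exactly via the isometry $\mathbb{E}\bigl[\int_{|z|<1}z_{p}\tilde{N}_{2}(k,dz)\int_{|z|<1}z_{q}\tilde{N}_{2}(k,dz)\bigr]=k\int_{|z|<1}z_{p}z_{q}\,\nu_{2}(dz)$, so the $O(\ln n)/n$ Ces\`aro argument leaves the exact drift matrix $\eta^{2}\gamma_{2}\gamma_{2}^{\mathrm{T}}$, hypothesis (iv) holds, and the full sequence converges --- with the sandwich between (4.14) and (4.14)$+$(4.17) recovered afterwards as a consequence rather than taken as the hypothesis. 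This is a strictly cleaner and stronger argument, and you correctly flag that falling back on bounds alone would only give the subsequential conclusion. One caveat you share with the paper rather than fix: splitting $\mathbb{E}Z_{k}^{2}$ into an $X$-part and a $P$-part "by independence" silently discards the cross terms $\mathbb{E}X_{k}P_{k}^{\mathrm{T}}+\mathbb{E}P_{k}X_{k}^{\mathrm{T}}$, which do not vanish here because $\mathbb{E}P_{k}=\eta\gamma_{2}\neq 0$ and $\mathbb{E}X_{k}\to\int_{\Omega}\omega\,dx\neq 0$ (unlike Theorem 4.1, where the Brownian term is centered); since the paper's own proof makes the identical omission, this is not a gap relative to it, but your exact-limit computation would make it easy to include these terms and obtain the true limiting covariance.
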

\begin{proof}
Similar to the proof of Theorem 4.3, for some constant $\delta>0$, which can be normalized as $1$ , we denote $T$ as $n\delta$ and divide $\left [ 0,T \right ] $ into $n$ equidistance parts.
By defining $Q_{n}=\sum_{k=1}^{n}\theta_{k}/k$, we pick sequences $H_{n}$, $V_{n}$, such that $Q_{n}=H_{n}+V_{n}$ and
\begin{align*}
	H_{n}&=\sum_{k=1}^{n}\frac{1}{k}\left ( \theta_{0}+\int_{0}^{k}\omega\left ( I_{s} \right ) ds+\eta \xi_{2}B^{2}_{k} \right )=\sum_{k=1}^{n}X_{k} ,\\
	V_{n}&=\sum_{k=1}^{n}\frac{1}{k}\left ( \eta\gamma_{2}k+\eta\int_{\left |z  \right |<1 }z\tilde{N}_{2}\left ( k,dz \right )  \right )=\sum_{k=1}^{n}Y_{k}.
\end{align*}
Here we denote $B_{k}^{2}$ as $\left ( B_{k1}^{2},\cdots,B^{2}_{kd} \right )^{\mathrm{T}} $. Therefore, given the independence of $\theta_{p}$ and $\theta_{q}$ for different $p,q=1,\cdots,d$, it follows that
\begin{align*}
	\mathbb{E}\left ( \mathrm{exp}\left \{i\left \langle t,Q_{n} \right \rangle  \right \} \right ) =\mathbb{E}\left ( \mathrm{exp}\left \{ i\left \langle t,\sum_{k=1}^{n}\frac{\theta_{k}}{k} \right \rangle \right \}  \right ) =\prod_{k=1}^{n}\mathbb{E}\left ( \mathrm{exp}\left \{ i\left \langle t,\frac{\theta_{k}}{k} \right \rangle \right \} \right ) .\tag{4.16}
\end{align*}
Applying the characteristic function grouping idea mentioned before, we choose an integer $h=h\left ( n \right )>0$ and a sequence $\left \{m_{k}  \right \}_{k=1,\cdots,h} $ satisfying $\sum_{k=1}^{h}m_{k}=n$, then we can divide $\left \{ \theta_{j}/j \right \}_{j=1,\cdots,n}$ into $h$ groups with different characteristic functions and represent the $k$-th "group" with characteristic function $\lambda_{k}\left ( t \right )$ and $m_{k}$ numbers of variables from the sequence $\left \{ \theta_{j}/j \right \}_{j=1,\cdots,n}$, which are denoted as $Z_{k},k=1,\cdots,h$. To this end, (4.16) turns to
\begin{align*}
	\mathbb{E}\left ( \mathrm{exp}\left \{ i\left \langle t,Q_{n} \right \rangle \right \} \right ) =\prod_{k=1}^{h}\lambda_{k}\left (t  \right )^{m_{k}},\ \ t \in \mathcal{O},
\end{align*}
where $\lambda_{k}\left ( t \right )$ can be expressed as
\begin{align*}
	\lambda_{k}\left ( t \right )=\mathbb{E}\left ( \mathrm{exp}\left \{ i\left \langle t,Z_{k} \right \rangle  \right \}  \right )
	=\mathrm{exp}\left \{ i\left \langle t,\mathbb{E}Z_{k} \right \rangle-\frac{t^{\mathrm{T}}\mathbb{E}Z_{k}^{2}t}{2} +o\left ( t^{\mathrm{T}}t \right )  \right \},\ \ t \in \mathcal{O} .
\end{align*}
If $A_{k},\sigma_{k}\in \mathbb{R}^{d}$ and functions $c\left ( t \right )$, $d_{n}\left ( t \right ) $ are defined as $A_{k}=\mathbb{E}Z_{k} $, $ \sigma_{k}^{2}=\mathbb{E}Z_{k}^{2}$ and $c\left ( t \right )=1 $, $d_{n}\left ( t \right )=0 $, then 
\begin{align*}
	\mathbb{E}\left ( \mathrm{exp}\left \{ i\left \langle t,Q_{n} \right \rangle \right \} \right )
	&=\prod^{h}_{k=1}c\left ( t \right ) \lambda_{k}\left ( t \right )^{m_{k}}+d_{n}\left (t  \right )  ,
\end{align*}
which corresponds to (i) of Theorem 3.2. Moreover, by examining the continuity of $c=1$ at $t=\mathbf{0}$ and the convergence of $\left \| d_{n}\left ( t \right ) \right \|_{L^{\infty}\left ( \mathcal{O} \right ) }$ towards 0 as $n$ goes to infinity, it remains  to verify (iv) or (v) of Theorem 3.2.
According to Theorem 4.3, the matrix $\lim_{n \to \infty}\sum_{k=1}^{n}\mathbb{E}X_{k}^{2}/n$ is well defined as $\left ( \int_{\Omega} \omega\left ( x \right ) dx \right )^{2} $. Now we pay our attention to the additional part.  If we denote $Y_{k}=\left ( Y_{k1},\cdots,Y_{kd} \right )^{\mathrm{T}}$ and $\gamma_{2}=\left ( \gamma_{21},\cdots,\gamma_{2d} \right )^{\mathrm{T}}$, then according to Theorem 4.2, for $i=1,\cdots,d$, the diagonal part of $\lim_{n\to\infty}\sum_{k=1}^{n}\mathbb{E}Y_{k}^{2}/n$ can be estimated as
\begin{align*}
	\lim_{n\to\infty}\frac{1}{n}\sum_{k=1}^{n}\mathbb{E}Y_{ki}^{2}\color{black}&
	\le\lim_{n\to\infty}\frac{1}{n}\sum_{k=1}^{n} \left [ 2\eta^{2}\gamma_{2i}^{2}+ \frac{2\eta^{2}}{k}\int_{\left |z_{i}  \right |<1 }\left |z_{i}  \right |^{2}\nu\left ( dz_{i} \right )\right ]  =2\eta^{2}\gamma_{2i}^{2}  .
\end{align*}
Meanwhile, for different $p,q=1,\cdots,d$, 
\begin{align*}
	\mathbb{E}Y_{kp}Y_{kq}&= \mathbb{E}\left ( \eta\gamma_{2p}+\frac{\eta}{k}\int_{\left |z_{p}  \right |<1 }z_{p}\tilde{N}_{2}\left ( k,dz_{p} \right )  \right )\left ( \eta\gamma_{2q}+\frac{\eta}{k}\int_{\left |z_{q}  \right |<1 }z_{q}\tilde{N}_{2}\left ( k,dz_{q} \right )  \right ) \\
	&\le \frac{\eta^{2}}{2}\left ( \gamma_{2p}+\gamma_{2q}\right )^{2}+\frac{\eta^{2}}{k} \int_{\left |z_{p}  \right |<1 }\ \left | z_{p} \right |^{2}  \nu\left ( dz_{p} \right )+\frac{\eta^{2}}{k} \int_{\left |z_{q}  \right |<1 }\ \left | z_{q} \right |^{2}  \nu\left ( dz_{q} \right ),
\end{align*}
so that
\begin{align*}
	\lim_{n \to \infty}\frac{1}{n}\sum_{k=1}^{n}\mathbb{E}Y_{kp}Y_{kq}\le \frac{\eta^{2}}{2}\left ( \gamma_{2p}+\gamma_{2q}\right )^{2}.
\end{align*}
Therefore, for every $p,q=1,\cdots,d$, we conclude that
\begin{align*}
\lim_{n \to \infty}\frac{1}{n}\sum_{k=1}^{n}\mathbb{E}Y_{kp}Y_{kq}&\le\frac{\eta^{2}}{2}\left ( \gamma_{2p}+\gamma_{2q}\right )^{2} .
\end{align*}
To this end, utilizing the independence between $X_{k}$ and $Y_{k}$, we construct a matrix $\eta^{2}=\left ( \eta_{pq}^{2} \right )_{d\times d} $ for $\lim_{n \to \infty}\sum_{k=1}^{h}m_{k}\sigma_{k}^{2}/n$ such that 
\begin{align*}
	\eta_{pq}^{2}=\eta^{2}\left ( \gamma_{2p}+\gamma_{2q}\right )^{2} +2\left ( \int_{\Omega}\omega_{p}\left ( x \right )dx  \right )\left ( \int_{\Omega}\omega_{q}\left ( x \right )dx  \right ) ,\ \ p,q=1,\cdots,d,\tag{4.17}
\end{align*}
while the $\left ( p,q \right ) $-th element of matrix $\lim_{n \to \infty}\sum_{k=1}^{h}m_{k}\sigma_{k}^{2}/n$ has a lower positive bound (4.14), that is, $\mu_{pq}^{2}$, and an upper bound (4.14)+(4.17), that is, $\eta^{2}_{pq}+\mu_{pq}^{2}$. We're now in a position to apply Theorem 3.2 and reach our final result.
\end{proof}

\section*{Acknowledgements}
\begin{sloppypar}
The second author (Li Yong) is supported by National Basic Research Program of China (Grant number
[2013CB8-34100]), National Natural Science Foundation of China (Grant numbers [11571065], [11171132], and
[12071175]),
and Natural Science Foundation of Jilin Province (Grant number  [20200201253JC]).
\end{sloppypar}


\begin{thebibliography}{xx}
\bibitem{01} D. Applebaum. Lévy Process and Stochastic Calculus, second edition, Cambridge University Press, 2009.
\bibitem{02} S. Albeverio, B. Rudiger, and J. Wu. Invariant measures
and symmetry property of L\'evy type operators. Potential Anal. 13(2000) 147–168.
\bibitem{03} J.M. Bismut. Mécanique aléatoire, LNM, vol. 866, Springer-Verlag, Berlin, 1981.
\bibitem{04} X. Chen. Limit theorems for functionals of ergodic Markov chains with
general state space. Mem. Amer. Math. Soc. 139(1999) xiv+203.
\bibitem{05} P. J. Channell and C. Scovel. Symplectic integration of Hamiltonian systems. Nonlinearity 3(1990) 231–259.
\bibitem{06} M. Denker. The central limit theorem for dynamical systems. Banach Center Publ. 23(1989) 33–62.
\bibitem{07}Y. Derriennic. Some aspects of recent works on limit theorems in ergodic
theory with special emphasis on the \textquotedblleft central limit theorem\textquotedblright. Discrete
Contin. Dyn. Syst. 15(2006) 143–158.
\bibitem{08} R. de la Llave, A. Gonz\'alez, \'A. Jorba, and J. Villanueva. KAM theory
without action-angle variables. Nonlinearity 18(2005) 855–895.
\bibitem{09} D. Dolgopyat. Limit theorems for partially hyperbolic systems. Trans.
Amer. Math. Soc. 356(2004) 1637–1689.
\bibitem{10} J.D. Simoi and C. Liverani. Limit theorems for fast-slow
partially hyperbolic systems. Invent. Math. 213(2018) 811–1016.
\bibitem{11} H.S. Dumas. The KAM Story: A Friendly Introduction to the Content, History, and Significance of Classical Kolmogorov--Arnold--Moser Theory, World Scientific Publishing Co. Pte. Ltd., Hackensack, NJ, 2014.
\bibitem{12} R. Dieckerhoff and E. Zehnder. Boundedness of solutions via the twist-theorem. Ann. Scuola Norm. Sup. Pisa Cl. Sci. (4) 14(1987) 79–95.
\bibitem{13} W. Feller. An Introduction to Probability Theory and Its Applications, vol. II, second edition, John Wiley \& Sons, Inc., New York-London-Sydney, 1971.
\bibitem{14} K. Fernando and F. P\'ene. Expansions in the local and the
central limit theorems for dynamical systems. Comm. Math. Phys.
389(2022) 273–347.
\bibitem{15} M.I. Freidlin and A.D. Wentzell. Random Pertubations of Dynamical Systems, Springer-Verlag, New York, 1984. 
\bibitem{16} S. Gouëzel. Limit theorems in dynamical systems using the spectral
method. Proc. Sympos. Pure Math. 89(2015) 161–193.
\bibitem{17} J. Huang, D. Nualart, and L. Viitasaari. A central limit
theorem for the stochastic heat equation. Stochastic Process. Appl.
130(2020) 7170–7184.
\bibitem{18} G. Haller and S. Wiggins. Orbits homoclinic to resonances the
Hamiltonian case. Phys. D. 66(1993) 298–346.
\bibitem{19} A. Katok and B. Hasselblatt. Introduction to the Modern Theory of Dynamical Systems, with a supplementary chapter by A. Katok and L. Mendoza, Encyclopedia of Mathematics and Its Applications, vol. 54, Cambridge University Press, Cambridge, 1995.  
\bibitem{20} T. Komorowski and A. Walczuk. Central limit theorem for Markov
processes with spectral gap in the Wasserstein metric. Stochastic Process.
Appl. 122(2012) 2155–2184.
\bibitem{21} X. Li. An averaging principle for a completely integrable stochastic
Hamiltonian system. Nonlinearity 21(2008) 803–822.
\bibitem{22} B. Liu. Boundedness for solutions of nonlinear periodic differential equations via Moser's twist theorem. Acta Math. Sinica (N.S.) 8(1992) 91–98. 
\bibitem{23} R. Liu and K. Lu. Exponential mixing and limit theorems of quasi-periodically forced 2D
stochastic Navier-Stokes Equations in the hypoelliptic setting. arXiv:2205.14348 (2022).

\bibitem{24} W. Liu, M. Rockner, X. Sun, and Y. Xie. Averaging
principle for slow-fast stochastic differential equations with time dependent
locally Lipschitz coefficients. J. Differ. Equ. 268(2020) 2910–2948.
\bibitem{25} Y. Li and Y. Yi. Persistence of lower dimensional tori of general
types in Hamiltonian systems. Trans. Amer. Math. Soc. 357(2005) 1565–1600.
\bibitem{26} B. Øksendal. Stochastic Differential Equations: An Introduction with Applications, sixth edition, 
%Universitext, 
Springer-Verlag, %Berlin,
2003. 
\bibitem{27} H. Poincaré. New Methods of Celestial Mechanics, vol. 3, History of Modern Physics and Astronomy, vol. 13, American Institute of Physics, New York, 1993, Integral invariants and asymptotic properties of certain solutions, Translated from the French, revised reprint of the 1967 English translation, with endnotes by G.A. Merman, edited and with an introduction by Daniel L. Goroff.
\bibitem{28} J. Pöschel. A lecture on the classical KAM theorem. In Smooth
ergodic theory and its applications (Seattle, WA, 1999). Proc. Sympos. Pure Math. 69(2001) 707–732.
\bibitem{29} A. Shirikyan. Law of large numbers and central limit theorem for
randomly forced PDE's. Probab. Theory Related Fields 134(2006) 215–247.
\bibitem{30} P. Wei, Y. Chao, and J. Duan. Hamiltonian systems with
L\'evy noise symplecticity, Hamilton's principle and averaging principle.
Phys. D 398(2019) 69–83.
\bibitem{31} L. Wu. Large and moderate deviations and exponential convergence
for stochastic damping Hamiltonian systems. Stochastic Process. Appl.
91(2001) 205–238.
\bibitem{32} Y. Wang, F. Wu, and C. Zhu. Limit theorems for additive functionals of stochastic functional differential equations with infinite delay. J.
Differ. Equ. 308(2022) 421–454.
\bibitem{33} X. Zhang. Stochastic flows and Bismut formulas for stochastic Hamiltonian systems. Stochastic Process. Appl. 120(2010) 1929–1949.
\end{thebibliography}
\end{document}